\documentclass{article}
\usepackage[utf8]{inputenc}
\usepackage{blindtext}
\usepackage{xcolor}
 \usepackage{epsfig,float}
 \usepackage{wrapfig,lipsum}
\usepackage{amsthm}
\usepackage{graphicx,amssymb,amsmath,latexsym}
\usepackage{caption}
\usepackage{subcaption}

\title{\textbf{Singular twisted links and singular twisted virtual braids}}
\author{{ \small KOMAL NEGI}, {\small MADETI PRABHAKAR}}

\theoremstyle{plain}
\newtheorem{theorem}{Theorem}[section]
\newtheorem{lemma}[theorem]{Lemma}

\newtheorem{corollary}[theorem]{Corollary}

\theoremstyle{definition}
\newtheorem{definition}[theorem]{Definition}
\newtheorem{remark}[theorem]{Remark}
\newtheorem{Example}[theorem]{Example}
\date{}

\begin{document}

\maketitle

\begin{abstract}
The concepts of twisted knot theory and singular knot theory inspire the introduction of singular twisted knot theory. This study showcases similar findings for singular twisted links, including the Alexander theorem and the Markov theorem derived from knot theory. Moreover, in this paper we define singular twisted virtual braids and their monoid structure. Additionally, we provide both a monoid and a reduced monoid presentation for singular twisted virtual braids.\\

\noindent  \textbf{MSC2020:} 57K10, 57K12, 57M15\\

\noindent \textbf{Keywords.} Singular twisted knots, singular twisted virtual braids.

\end{abstract}

\section{Introduction}

S. Kamada~\cite{sk} proved that every oriented virtual link can be depicted as the closure of a virtual braid. 
Furthermore, it is established that two virtual braids have isotopic closures if and only if they are connected by virtual braid isotopy and a finite series of Markov moves, a widely recognized result. 
Similar theorems have been established by Kauffman and Lambropoulou~\cite{kl,KS} through the use of $L_v$-equivalence. 
Moreover, C. Caprau et al.~\cite{cpas} extended the $L_v$-move approach to virtual singular braids, deriving the analogous $L_v$-move for virtual singular braids.

Recently, Kamada et al.~\cite{NPK} established Alexander and Markov's theorems for twisted virtual braids and twisted links~\cite{MO,NKSK}. 
Furthermore, they verified that the collection of twisted virtual braids with $n$ strands forms a group~\cite{VTKM}. It will be interesting to see similar kind of results for singular twisted virtual braids.

This paper focuses on oriented singular twisted links and establishes Alexander and Markov-type theorems for this category of links. These theorems play a important role in understanding the structure of singular twisted knots and links. First, we define the singular twisted virtual braid monoid through generators and relations. This definition unveils that the singular twisted virtual braid monoid with $n$ strands extends from the virtual singular braid monoid with $n$ strands by a group, which is isomorphic to $\mathbb{Z}_2^n$.

Several braiding techniques can be employed to demonstrate the extension of the Alexander theorem to the category of singular twisted virtual braids. 
In our context, we adopt the braiding algorithm outlined in~\cite{cpas}.

This article is organized as follows,
in Section 2, we provide some preliminaries of singular twisted links and singular twisted virtual  braids. Also, we stated the Alexander theorem for singular twisted links.
In Section 3, we outline $L_v$-moves and present the Markov-type theorems for singular twisted links and singular twisted virtual braids. Moreover, we have established that number of Markov moves or $L_v$-moves can be reduced for singular twisted virtual braids.
In Section 4, we provide a reduced presentation of the singular twisted virtual braid monoid.

\section{Preliminaries}
In this section we define singular twisted links in topological and combinatorial way. We also define singular twisted virtual braids. Moreover, we give Alexander theorem for singular twisted links.
\subsection{Singular twisted link}
\begin{definition}
    A \textit{singular twisted link} is an embedding of a 4-valent graph $G$ in $S_g \times I$, where $S_g$ is a surface of the genus $g$ and $I$ is the unit interval such that the thickened surface $S_g \times I$ is orientable.
\end{definition}
\begin{remark}
    A \textit{singular twisted knot} is an embedding of a one component 4-valent graph $G$ in $S_g \times I$.
\end{remark}

There is also a combinatorial way to define singular twisted links. For that we need to define singular twisted link diagrams. 
\begin{definition}
    A singular twisted link diagram is a decorated immersion of finitely many disjoint copies of $S^1$ into $\mathbb{R}^2$, with bars and finitely many transverse double points such as classical crossings, virtual crossings, and singular crossings. 
\end{definition}
Two singular twisted link diagrams are said to be equivalent(or ambient isotopic) if they are related by a sequence of Reidemeister moves shown in Figure~\ref{rts}.
\begin{figure}[h]
	\centering
\includegraphics[width=10cm,height=8cm]{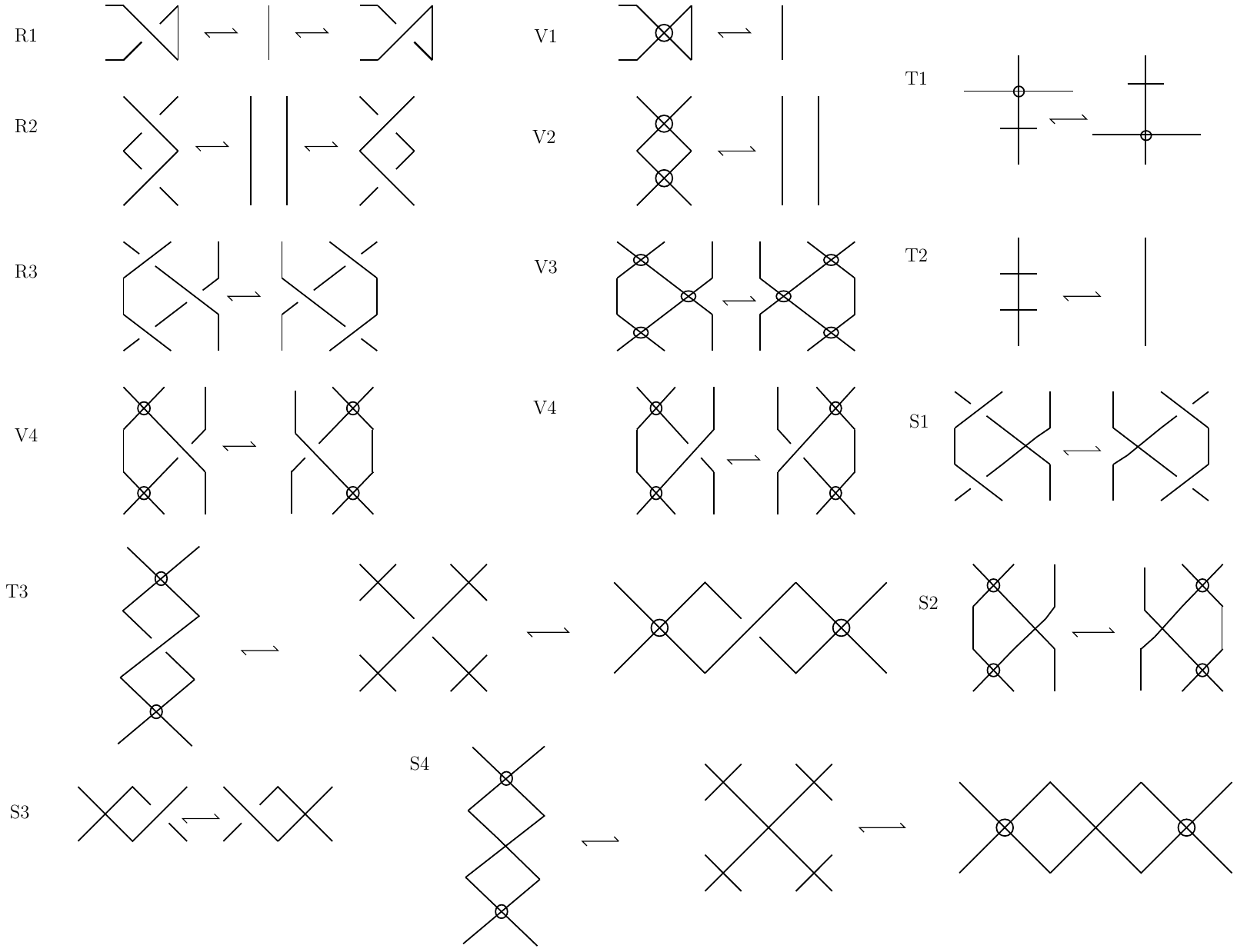}	
  \caption{Reidemeister moves for singular twisted link diagrams}
 \label{rts}
   \end{figure}

\begin{definition}
    A singular twisted link is the equivalence class of a singular twisted link diagram.
\end{definition} 
\begin{Example}
    Consider the two singular twisted link diagrams $D_1$ and $D_2$ as shown in Figure~\ref{stkd}. $D_2$ can be obtained from $D_1$ by sequence of Reidemeister moves S4, S3, and T2, respectively.
    \begin{figure}[h]
	\centering
\includegraphics[width=10cm,height=3cm]{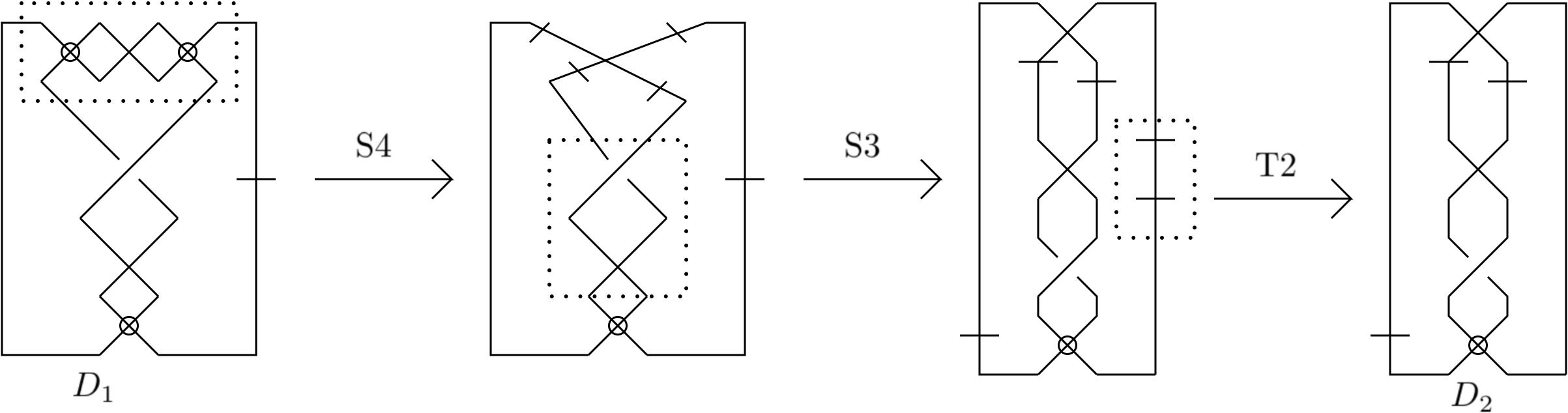}	
  \caption{Singular twisted link diagrams}
 \label{stkd}
   \end{figure}
\end{Example}

As we know, classical knots have a nice connection with braids. A similar connection exists between virtual knots and virtual braids. Therefore, it is natural to ask: Does there exist any braid-like structure in this theory? In the next subsection, we define singular twisted virtual braids, and we demonstrate their connection to singular twisted links.

\subsection{Singular twisted virtual braid}

\begin{definition}
A {\it singular twisted virtual braid diagram} on $n$ strands (or of degree $n$) 
is a union of $n$ smooth or polygonal curves, which are called {\it strands}, in $\mathbb{R}^2$ connecting points $(i,1)$ with points $(q_i,0)$ $(i=1, \dots, n)$, where $(q_1, \ldots, q_n)$ is a permutation of the numbers $(1, \ldots, n)$, such that these curves are monotonic with respect to the second coordinate and intersections of the curves are transverse double points equipped with information as a positive/negative/virtual/singular crossing and curves may have {\it bars} by which we mean short arcs intersecting the curves transversely.
See Figure~\ref{ext}, where the five crossings are negative, positive, virtual, singular and positive from the top. 
\end{definition}
\begin{figure}[h]
  \centering
    \includegraphics[width=2cm,height=3cm]{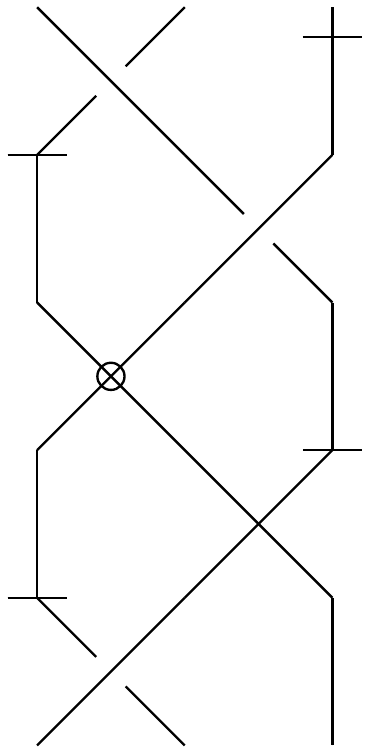}
        \caption{A singular twisted virtual braid on 3 strands}
        \label{ext}
        \end{figure}

\begin{definition}
Two singular twisted virtual braid diagrams $b$ and $b'$ of degree $n$ are {\it equivalent} if they are related by classical, virtual, twisted and singular braid moves shown in Figure~\ref{bmoves},\ref{vbmoves},\ref{moves}, and \ref{smoves}, respectively. 
\end{definition}

\begin{figure}[h]
  \centering
    \includegraphics[width=8cm,height=3.5cm]{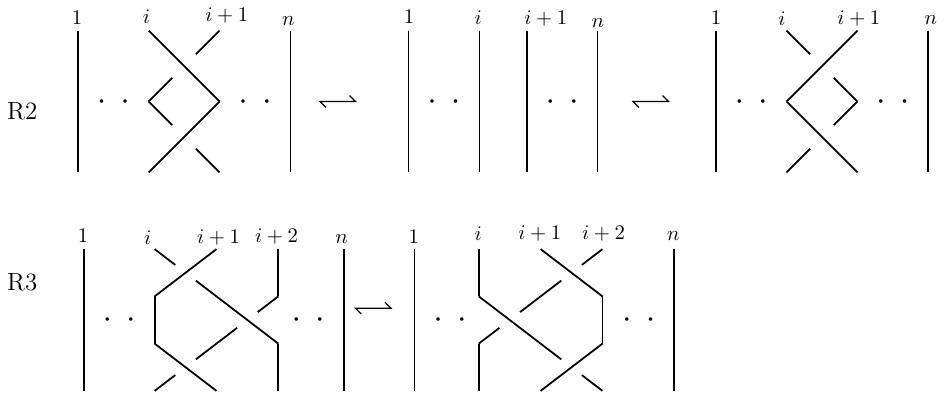}
        \caption{Classical braid moves}
        \label{bmoves}
        \end{figure}  
        
\begin{figure}[h]
  \centering
    \includegraphics[width=10cm,height=4cm]{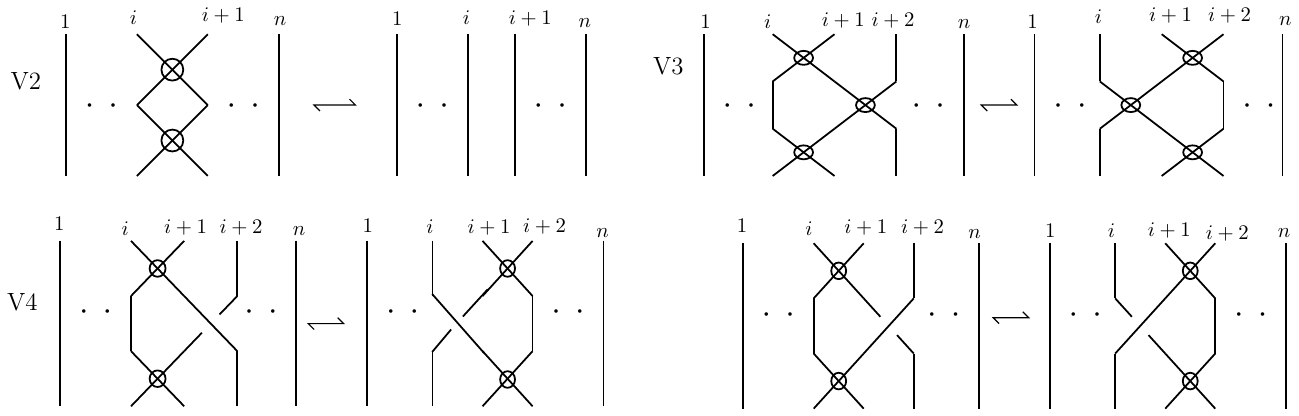}
        \caption{Virtual braid moves}
        \label{vbmoves}
        \end{figure}  
        
 \begin{figure}[h]
  \centering
    \includegraphics[width=9cm,height=4cm]{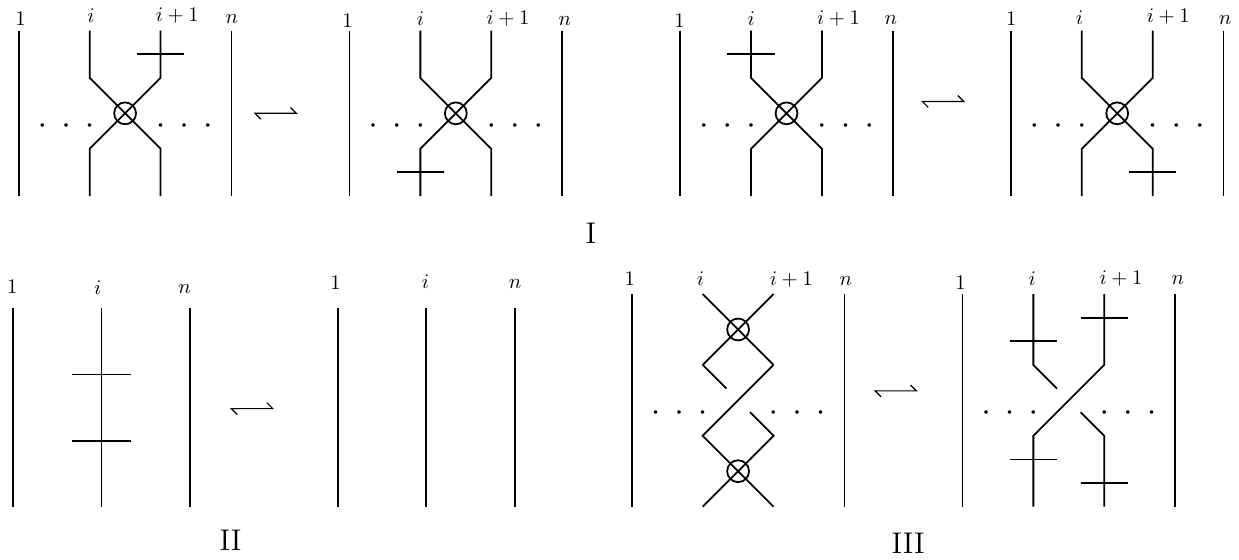}
        \caption{Twisted braid moves}
        \label{moves}
        \end{figure} 

 \begin{figure}[h]
  \centering
    \includegraphics[width=9cm,height=4.5cm]{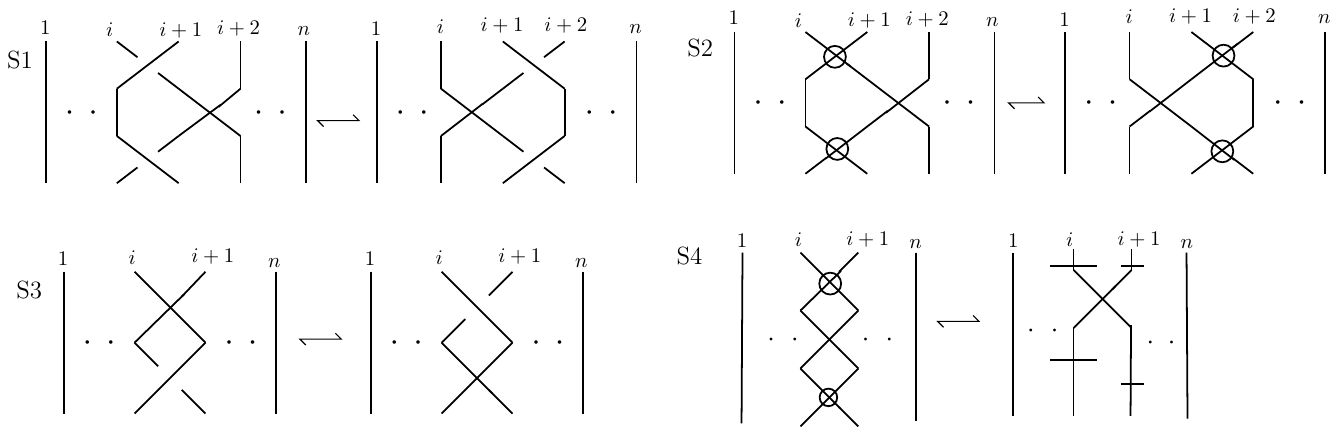}
        \caption{Singular braid moves}
        \label{smoves}
        \end{figure}

\begin{definition}
     A {\it singular twisted virtual braid} is an equivalence class of singular twisted virtual braid diagrams. 
\end{definition}
Similar to the case of singular braids and virtual singular braids, the set of singular twisted virtual braids forms a monoid, where the product is defined by the concatenation similar to the braid group.
The set of isotopy classes of singular twisted virtual braid on $n$ strands forms a monoid, which we denoted by $STVB_n$.  Define $STVB_\infty= \bigcup\limits_{n=1}^{\infty}STVB_n$.
\begin{figure}[h]
  \centering
    \includegraphics[width=12cm,height=2cm]{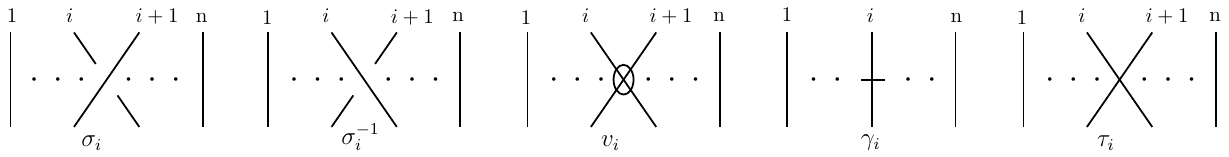}
        \caption{Generators of the monoid of singular twisted virtual braids}
        \label{gen}
        \end{figure}

\begin{theorem}\label{thm:StandardPresentation2}
The monoid $STVB_n$ is generated by standard generators, 
$\sigma_i^{\pm 1}$, $v_i$ $(i=1, \dots, n-1)$, $\gamma_i$ $(i=1, \dots, n)$, and  $\tau_i$  $(i=1, \dots, n-1)$ satisfy
the following relations, where $e$ denotes the identity element:
    \begin{align}
       \sigma_i \sigma_j & = \sigma_j \sigma_i  & \text{ for } & |i-j| > 1; \label{rel-height-ss}\\
        \sigma_i \sigma_{i+1} \sigma_i & = \sigma_{i+1} \sigma_i \sigma_{i+1} & \text{ for } & i=1,\ldots, n-2; \label{rel-sss}\\
        v_i^2 & = e  & \text{ for } & i=1,\ldots, n-1; \label{rel-inverse-v}\\
        v_i v_j & = v_j v_i & \text{ for } & |i-j| > 1 ; \label{rel-height-vv}\\
        v_i v_{i+1} v_i & = v_{i+1} v_i v_{i+1} & \text{ for } & i=1,\ldots, n-2; \label{rel-vvv}\\
        \sigma_i v_j & = v_j \sigma_i &  \text{ for } & |i-j| >1  ; \label{rel-height-sv}\\
        v_i \sigma_{i+1} v_i & = v_{i+1} \sigma_i v_{i+1} & \text{ for } & i=1,\ldots, n-2; \label{rel-vsv}\\
        \gamma_i^2 & = e & \text{ for } & i=1,\ldots, n; \label{rel-inverse-b}\\  
        \gamma_i \gamma_j & = \gamma_j \gamma_i   & \text{ for } & i,j=1,\ldots, n; \label{rel-height-bb} \\
        \gamma_j v_i & = v_i \gamma_j & \text{ for } & j\neq i, i+1; \label{rel-height-bv}\\
        \sigma_i\gamma_j & = \gamma_j\sigma_i & \text{ for } & j\neq i, i+1; \label{rel-height-sb}\\
        \gamma_{i+1} v_i & = v_{i} \gamma_i & \text{ for } & i=1,\ldots, n-1; \label{rel-bv} \\
        v_{i} \sigma_i v_{i} & = \gamma_{i+1} \gamma_i \sigma_{i} \gamma_i \gamma_{i+1} & \text{ for } &  i=1,\ldots, n-1; \label{rel-twist-III}\\
        \sigma_i \sigma_i^{-1} & = e  & \text{ for } & i=1,\ldots, n-1; \label{rel-height-ss1}\\
        \tau_i \tau_j & = \tau_j \tau_i & \text{ for } & |i-j| > 1 ; \label{rel-height-vv2}\\
        \sigma_i \tau_j & = \tau_j \sigma_i &  \text{ for } & |i-j| >1  ; \label{rel-height-sv1}\\
        \sigma_i \tau_{i} & = \tau_i \sigma_i &  \text{ for } &  i=1,\ldots, n; \label{rel-height-sv2}\\
        \sigma_i \sigma_{i+1} \tau_i & = \tau_{i+1} \sigma_i \sigma_{i+1} & \text{ for } & i=1,\ldots, n-2; \label{rel-sss1}\\
        \tau_i v_j & = v_j \tau_i & \text{ for } & |i-j| > 1 ; \label{rel-height-vv1}\\
        v_i \tau_{i+1} v_i & = v_{i+1} \tau_i v_{i+1} & \text{ for } & i=1,\ldots, n-2; \label{rel-vvv1}\\
        \tau_i\gamma_j & = \gamma_j\tau_i & \text{ for } & j\neq i, i+1; \label{1rel-height-sb}\\
       v_{i} \tau_i v_{i} & = \gamma_{i+1} \gamma_i \tau_{i} \gamma_i \gamma_{i+1} & \text{ for } &  i=1,\ldots, n-1. \label{1rel-twist-III}
    \end{align}
\end{theorem}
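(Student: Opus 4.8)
The plan is to realize $STVB_n$ as the quotient of a free monoid and identify its defining congruence with the one generated by the listed relations. Write $M_n$ for the monoid presented by the generators and relations in the statement. There is a monoid homomorphism $\Phi\colon M_n\to STVB_n$ sending each abstract generator to the elementary singular twisted virtual braid of Figure~\ref{gen}: $\sigma_i^{\pm1}$ to the single positive/negative crossing on strands $i,i+1$, $v_i$ to the single virtual crossing, $\tau_i$ to the single singular crossing, and $\gamma_i$ to the single bar on strand $i$. To see that $\Phi$ is well defined one checks that every defining relation of $M_n$ holds in $STVB_n$, which is a diagram-by-diagram inspection: the ``non-distant'' relations \eqref{rel-sss}, \eqref{rel-inverse-v}, \eqref{rel-vvv}, \eqref{rel-vsv}, \eqref{rel-inverse-b}, \eqref{rel-bv}, \eqref{rel-twist-III}, \eqref{rel-height-ss1}, \eqref{rel-height-sv2}, \eqref{rel-sss1}, \eqref{rel-vvv1}, \eqref{1rel-twist-III} are each exactly one of the braid moves of Figures~\ref{bmoves}--\ref{smoves}, while the ``distant'' relations \eqref{rel-height-ss}, \eqref{rel-height-vv}, \eqref{rel-height-sv}, \eqref{rel-height-bb}, \eqref{rel-height-bv}, \eqref{rel-height-sb}, \eqref{rel-height-vv2}, \eqref{rel-height-sv1}, \eqref{rel-height-vv1}, \eqref{1rel-height-sb} hold because the two elementary tangles involved occupy disjoint vertical bands, so their heights can be interchanged by a planar isotopy preserving monotonicity.

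Surjectivity of $\Phi$ is the Alexander-style slicing argument for braids. Given a singular twisted virtual braid diagram $b$, monotonicity with respect to the second coordinate lets us choose finitely many horizontal lines so that each resulting slab contains exactly one crossing (positive, negative, virtual or singular) or exactly one bar, with all strands elsewhere disjoint monotone arcs; after a small planar isotopy the feature in each slab can be taken to involve adjacent strand positions. Reading the slabs from top to bottom writes $b$ as a product of the standard generators, so $\Phi$ is onto.

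For injectivity one must show that if two words $w,w'$ over the generators satisfy $\Phi(w)=\Phi(w')$, then $w=w'$ in $M_n$. By definition $\Phi(w)=\Phi(w')$ means the two associated sliced diagrams are connected by a finite sequence of planar isotopies and braid moves from Figures~\ref{bmoves}--\ref{smoves}, and the plan is to translate each elementary step into defining relations of $M_n$. A planar isotopy either leaves the combinatorial sequence of features unchanged, so the word is unchanged, or passes through finitely many critical heights at which either two features lying in disjoint bands exchange order (a ``distant'' relation) or a feature moves between adjacent bands (absorbed by re-slicing); a standard general-position argument shows every such isotopy decomposes into these elementary steps. Each braid move of Figures~\ref{bmoves}--\ref{smoves}, read as a local substitution of one subword by another, is literally one of the remaining defining relations, up to first conjugating by a sequence of $v_i$'s (using \eqref{rel-vsv}, \eqref{rel-vvv}, \eqref{rel-bv}, \eqref{rel-vvv1}) to bring the affected strands into adjacent position. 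Hence $w$ and $w'$ are related by defining relations of $M_n$, proving injectivity.

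The main obstacle is the injectivity step, and within it the verification that these finitely many relations genuinely suffice and that no global feature of braid diagrams has been overlooked. A convenient way to organize the argument is to exploit that relations \eqref{rel-height-ss}--\eqref{rel-height-ss1} are precisely the known presentation of the twisted virtual braid group $TVB_n$~\cite{VTKM}: first pass to the quotient killing all $\tau_i$, where the comparison reduces to the established case of $TVB_n$, and then reintroduce the singular generators by mimicking the treatment of singular crossings in the virtual singular braid monoid~\cite{cpas}, using \eqref{rel-height-vv2}--\eqref{rel-vvv1}. The genuinely new content is the interaction of singular crossings with bars, encoded by \eqref{1rel-height-sb} and \eqref{1rel-twist-III}; here one repeats, verbatim with $\tau_i$ in place of $\sigma_i$, the reasoning that produced \eqref{rel-height-sb} and \eqref{rel-twist-III}, the point being that the twisted moves relating a bar to a crossing do not depend on whether that crossing is classical or singular.
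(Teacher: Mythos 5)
The paper states Theorem~\ref{thm:StandardPresentation2} without any proof, evidently regarding it as routine by analogy with the known presentations of the twisted virtual braid group and of the virtual singular braid monoid; so there is no argument of the authors' to compare yours against, and your proposal in fact supplies what the paper omits. Your skeleton is the standard and correct one. As literally worded the theorem only asserts generation and that the generators satisfy the listed relations, and for that your first two paragraphs (well-definedness of $\Phi$ by a move-by-move inspection, surjectivity by generic slicing into slabs each containing one crossing or one bar) are exactly what is needed. Since the paper later uses the statement as a full presentation (Section~4 speaks of ``the presentation\dots given in Theorem~\ref{thm:StandardPresentation2}''), your injectivity step is the substantive part, and its core --- a general-position argument decomposing any equivalence of diagrams into distant interchanges, re-slicings, and instances of the local moves of Figures~\ref{bmoves}--\ref{smoves}, each of which is literally one of the listed relations --- is the right route.

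Two soft spots in the injectivity discussion. First, the remark that each braid move is a relation ``up to first conjugating by a sequence of $v_i$'s to bring the affected strands into adjacent position'' is unnecessary and slightly confused: after generic slicing every crossing and every bar already sits at an adjacent pair of positions (respectively a single position), so each local move translates directly into a relation at the appropriate index, with no conjugation involved. Second, the proposed organization ``first pass to the quotient killing all $\tau_i$, then reintroduce the singular generators'' does not by itself establish completeness of the relations: knowing a presentation of a quotient or of the submonoid generated by $\sigma_i^{\pm1}, v_i, \gamma_i$ does not yield defining relations for the whole monoid, since an identity between two words could a priori require intermediate words involving $\tau_i$ in its derivation. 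That paragraph should be demoted to a consistency check; the proof is carried entirely by the direct general-position argument, which is the part that would need to be written out in detail (enumerating the critical events of a generic isotopy of monotone diagrams with bars) for the argument to be complete.
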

\begin{remark}
    If we consider generators $\sigma_i$, $v_i$ $(i=1, \dots, n-1)$, and $\gamma_i$ $(i=1, \dots, n)$ with the relations from (\ref{rel-height-ss}) to (\ref{rel-twist-III}), then this set forms a group called the twisted virtual braid group $TVB_n$~\cite{VTKM}.
\end{remark}
As of now, it is evident that a singular twisted virtual braid can yield a singular twisted link through the closure operation. The query arises: can we generate a singular twisted virtual braid from a singular twisted link? The answer is yes, there exists an algorithm for this process.

The method to convert any diagram of a singular twisted link into the form of a singular twisted virtual braid's closure is same as the braiding algorithm outlined in~\cite{cpas}, the only difference is the incorporation of bars on the strands. The given algorithm helps to establish a theorem concerning singular twisted links, analogous to the Alexander theorem for twisted virtual braids and twisted links~\cite{NPK}.

\begin{theorem}\label{Alexender type}(Alexander type theorem for singular twisted links)
    Every oriented singular twisted link can be represented as the closure of singular twisted virtual braid.
\end{theorem}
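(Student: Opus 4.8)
The plan is to adapt the braiding algorithm of Caprau et al.~\cite{cpas}, which turns any virtual singular link diagram into the closure of a virtual singular braid, to the twisted setting by carrying the bars along throughout the procedure. Fix an oriented singular twisted link diagram $D$ drawn in $\mathbb{R}^2$, together with a base point $\infty$ (the intended braid axis) and an orientation of the plane, so that ``going around the axis'' has a definite sense. As in the classical Alexander theorem, first isotope $D$ into general position: all classical, virtual and singular crossings occur at distinct radial heights, each bar is a short arc meeting exactly one strand transversely at a point that is not a crossing, and $D$ is piecewise linear. With respect to the radial direction around $\infty$, the arcs of $D$ partition into \emph{down-arcs} (coherently oriented with the braiding direction) and \emph{up-arcs}, and $D$ is (isotopic to) the closure of a braid exactly when no up-arcs occur.

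The core step is the up-arc elimination move. Given an up-arc $a$, subdivide it, if necessary, so that its interior contains no crossing and at most one bar, and then perform the standard braiding move of~\cite{cpas}: slide $a$ across the region containing $\infty$, replacing it by an arc that runs the long way around the axis and is now a down-arc, at the cost of introducing virtual crossings wherever the new arc sweeps across other strands. The crossings lying on those other strands (classical, virtual or singular) are left intact, since the swept copy of $a$ is pushed to a virtual crossing with each of them; a singular double point is handled exactly as a classical one, so the generators $\tau_i$ create no new difficulty here. The only genuinely new feature is a bar possibly carried by $a$: a bar is a purely local decoration on a strand, so the swept copy of $a$ carries its bar with it, and the bar stays a transverse mark on a now-monotone strand. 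Before applying the move one uses the bar moves (the $T$-type Reidemeister moves of Figure~\ref{rts}) to push any bar off of a turn-back point, so that every up-arc to be eliminated satisfies the hypotheses above. This bar bookkeeping is the step I expect to require the most care, since one must check that a bar can always be slid along its strand into the interior of an up-arc or a down-arc, away from the local maxima and minima of the radial height function, using only the permitted moves, and without creating new up-arcs in the process.

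Each application of the move strictly decreases the number of up-arcs (equivalently, a Seifert-circle-type complexity of $D$), just as in~\cite{cpas}, so the procedure terminates after finitely many steps with a diagram $D'$, equivalent to $D$ through Reidemeister moves for singular twisted link diagrams, in which every arc is a down-arc. Cutting $D'$ along a ray emanating from $\infty$ then exhibits $D'$ as the closure of a union of finitely many strands monotone with respect to the radial coordinate, carrying classical, virtual and singular transverse double points and transverse bars, that is, as the closure of a singular twisted virtual braid diagram. Therefore every oriented singular twisted link is the closure of a singular twisted virtual braid, as claimed; one may also read off the corresponding word in the generators $\sigma_i^{\pm 1}, v_i, \gamma_i, \tau_i$ of $STVB_n$, although that is not needed for the statement.
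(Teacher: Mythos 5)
Your proposal is correct and follows essentially the same route as the paper, which likewise invokes the braiding algorithm of Caprau et al.~\cite{cpas} and notes that the only new feature is carrying the bars along on the strands. In fact you supply more detail than the paper does (up-arc elimination, sliding bars away from turn-back points, termination by counting up-arcs), and your flagged concern about bar bookkeeping is exactly the point the paper glosses over with ``the only difference is the incorporation of bars.''
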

\begin{corollary}
    Every oriented twisted link can be represented as the closure of twisted virtual braid.
\end{corollary}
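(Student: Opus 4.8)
The plan is to deduce this statement as an immediate specialization of Theorem~\ref{Alexender type}. A twisted link is precisely a singular twisted link whose diagram happens to contain no singular crossings: formally, every (oriented) twisted link diagram is an (oriented) singular twisted link diagram in which the number of singular double points is zero. So the first step is simply to regard the given oriented twisted link $L$ as an oriented singular twisted link and invoke Theorem~\ref{Alexender type}, which produces a singular twisted virtual braid $\beta$ whose closure $\widehat{\beta}$ is ambient isotopic to $L$.

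The second step is to observe that $\beta$ in fact contains no singular crossings, so that $\beta$ is an ordinary twisted virtual braid and the corollary follows. This requires a quick inspection of the braiding algorithm adopted from~\cite{cpas} (with bars carried along on the strands, as explained in the paragraph preceding Theorem~\ref{Alexender type}): each step of that algorithm — subdividing the diagram into up-arcs and down-arcs, sliding down-arcs around using Reidemeister-type moves, and re-routing through virtual crossings — only rearranges the crossings already present and introduces new \emph{virtual} (and possibly classical, via the standard threading) crossings and bars, never a new singular double point. Hence the multiset of singular crossings of the output braid equals that of the input diagram, which is empty; therefore $\beta$ involves only $\sigma_i^{\pm1}$, $v_i$, and $\gamma_i$, i.e.\ $\beta \in TVB_n$ for some $n$.

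The only point that needs a sentence of justification, rather than being truly routine, is the claim that the closure operation and the braiding algorithm respect the subcategory ``no singular crossings'' — that is, that restricting Theorem~\ref{Alexender type} to this subcategory is legitimate. This is guaranteed because every move used in the algorithm, and the closure construction itself, is already a move of twisted virtual link/braid theory when no singular crossing is involved; the singular braid moves of Figure~\ref{smoves} are simply never called upon. I do not expect a genuine obstacle here: the content of the corollary is entirely carried by Theorem~\ref{Alexender type}, and the argument amounts to checking that the proof of that theorem is ``singular-crossing-preserving,'' which is evident from the description of the algorithm.

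\begin{proof}
    Let $L$ be an oriented twisted link, presented by a twisted link diagram $D$. Viewing $D$ as a singular twisted link diagram with no singular crossings, Theorem~\ref{Alexender type} yields a singular twisted virtual braid $\beta$ on some number $n$ of strands whose closure $\widehat{\beta}$ represents $L$. Inspecting the braiding algorithm of~\cite{cpas} used in the proof of Theorem~\ref{Alexender type}, we see that every intermediate move either is a Reidemeister move of twisted virtual link theory or re-routes an arc through new virtual crossings, and in all cases the singular double points of the diagram are merely transported, never created or destroyed. Since $D$ has no singular crossings, neither does $\beta$, so $\beta$ is a word in $\sigma_i^{\pm 1}$, $v_i$, and $\gamma_i$ only; that is, $\beta \in TVB_n$. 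Thus $L$ is the closure of the twisted virtual braid $\beta$.
\end{proof}
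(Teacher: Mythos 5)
Your proof is correct and takes the same route the paper intends: the paper states this corollary with no proof at all, treating it as the immediate specialization of Theorem~\ref{Alexender type} to diagrams without singular crossings, and your observation that the braiding algorithm never creates singular crossings is exactly the (omitted) justification.
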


We have observed that if two singular twisted virtual braids are equivalent, their corresponding closures are equivalent as singular twisted links. However, the converse is not necessarily true. It is possible for two non-equivalent singular twisted virtual braids to have equivalent closures. To establish this converse, we can introduce a new notion of equivalence between the singular twisted virtual braids, known as $L_v$-equivalence or Markov equivalence.
\section{$L_v$-moves and Markov-type theorems for singular twisted virtual braids}

In this section we show a theorem on braid presentation of singular twisted links which is analogous to the
Markov theorem for twisted links. For this, we present the $L_v$-moves applicable to singular twisted virtual braids, wheras the literature already includes $L_v$ moves for virtual braids and singular virtual braids. Furthermore, we have demonstrated the capability to decrease the number of $L_v$-moves required for singular twisted virtual braids. 

All types of $L_v$-moves for singular twisted virtual braids are stated below:

A \textit{$L_v$-move of type 1}, denoted by \textit{ST1-move} contains two types of moves, the virtual and real $L_v$-move (denoted as VST1-move and RST1-move, respectively).  These moves are illustrated in Figure~\ref{Lv1move}. 
    \begin{figure}[h]
  \centering
    \includegraphics[width=10cm,height=2.5cm]{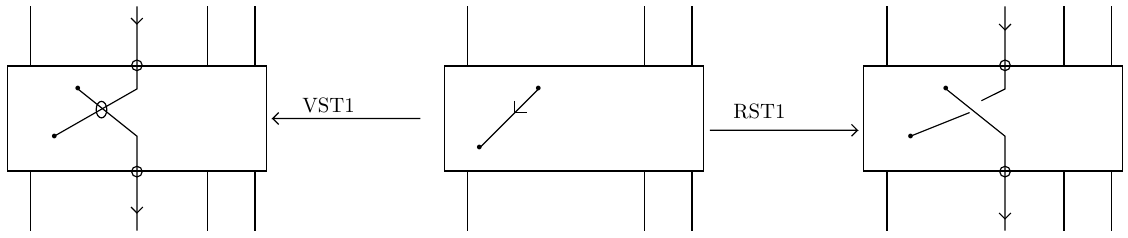}
        \caption{ST1-move: Virtual or real $L_v$-move}
        \label{Lv1move}
        \end{figure}     
        
  A \textit{$L_v$-move of type 2}, denoted by \textit{ ST2-move} is a under threaded $L_v$-move shown in Figure~\ref{Lv3move}. It contains two types of moves, the left ST2-move and the right ST2-move (denoted as LST2-move and RST2-move, respectively).  
     \begin{figure}[h]
  \centering
    \includegraphics[width=10cm,height=2.5cm]{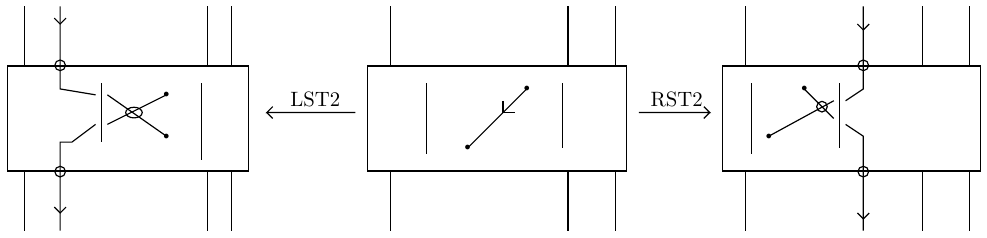}
        \caption{ST2-move: Left and Right under threaded $L_v$-move}
        \label{Lv3move}
        \end{figure} 
        
 A \textit{$L_v$-move of type 3}, denoted by \textit{ ST3-move} is a $rs$-threaded $L_v$-move. There are two types of them denoted as LST3 and RST3 shown in Figure~\ref{Lv4move}.
     \begin{figure}[h]
  \centering
    \includegraphics[width=12cm,height=2.5cm]{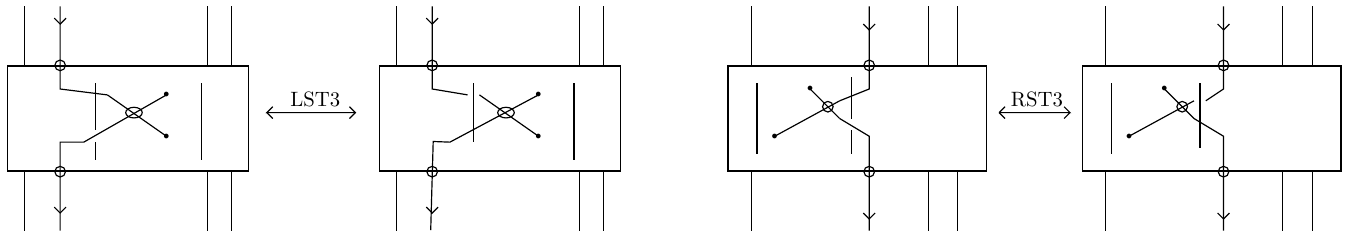}
        \caption{ST3-move: Left and Right $rs$-threaded $L_v$-move}
        \label{Lv4move}
        \end{figure} 
\begin{figure*}[h]
    \centering
    \begin{subfigure}[h]{0.5\textwidth}
        \centering
        \includegraphics[height=0.85 in]{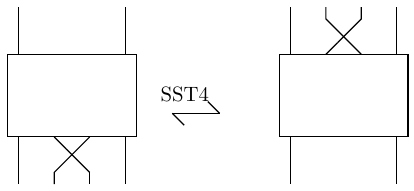}
        \caption{Singular conjugation move}
         \label{Lvcmove1}
    \end{subfigure}%
    ~ 
    \begin{subfigure}[h]{0.5\textwidth}
        \centering
        \includegraphics[width=6cm,height=2.2cm]{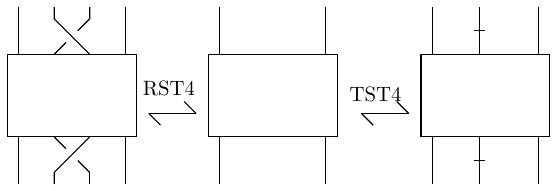}
        \caption{Real and Twist conjugation move.}
         \label{Lvcmove}
    \end{subfigure}
    \caption{Different types of ST4-move}
\end{figure*}

A \textit{$L_v$-move of type 4}, denoted by \textit{ ST4-move} is a conjugation move. There are three types of conjugation moves: Singular, real, and twist conjugation (denoted as SST4, RST4 and TST4-move, respectively) as shown in Figure~\ref{Lvcmove1} and Figure~\ref{Lvcmove}.  

\begin{remark}
In~\cite{KS}, it has been shown that virtual conjugation, basic $L_v$-moves, left $rL_v$- and $vL_v$-moves, over-threaded $L_v$-moves, and multi-threaded $L_v$-moves can be derived from the $L_v$-equivalence for virtual braids(it encompasses real conjugation, right $rL_v$- and $vL_v$-moves, left and right under-threaded $L_v$-moves, and virtual braid isotopy). Consequently, these moves can also be derived from the ST1-ST4 $L_v$-moves, thus we do not require to write them in our $L_v$-move Markov-type theorem for singular twisted virtual braids.
\end{remark}
Now we are ready to define new notion of equivalence for the singular twisted virtual braids.

\begin{definition}
    Two twisted virtual braids are said to be twisted $L_v$-equivalent if they differ by twisted virtual braid isotopy and a finite sequence of $L_v$ moves \textit{ST1-ST2, RST4}, and \textit{TST4}.
\end{definition}
\begin{definition}\label{def1}
    Two singular twisted virtual braids are said to be $L_v$-equivalent if they differ by singular twisted virtual braid isotopy and a finite sequence of $L_v$ moves \textit{ST1-ST4}.
\end{definition}

\begin{theorem}
    Two singular twisted virtual braids have isotopic closures if and only if they are $L_v$-equivalent.
\end{theorem}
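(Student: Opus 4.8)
The plan is to prove the two implications separately, with the bulk of the work going into the forward direction (isotopic closures $\Rightarrow$ $L_v$-equivalence).

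For the reverse implication I would argue by a purely local verification: for each of the moves ST1, ST2 (Figures~\ref{Lv1move},~\ref{Lv3move}), ST3 (Figure~\ref{Lv4move}) and ST4 (Figures~\ref{Lvcmove1},~\ref{Lvcmove}) one closes up both sides and checks, using only the Reidemeister moves of Figure~\ref{rts}, that the two closures are equivalent singular twisted links. For ST4 this is immediate because closure identifies the top and bottom of a braid, so a conjugation does nothing to the closure; for ST1--ST3 the extra pair of crossings created by the move is cancelled after pulling the new strand once around the braid axis, exactly as in the virtual and virtual-singular settings~\cite{kl,KS,cpas}. Together with the obvious fact that singular twisted virtual braid isotopy preserves the closure, this gives one direction.

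For the forward implication, let $\beta_1,\beta_2 \in STVB_\infty$ have equivalent closures. Then there is a finite chain of singular twisted link diagrams $\widehat{\beta_1} = D_0 \to D_1 \to \cdots \to D_m = \widehat{\beta_2}$ in which consecutive diagrams differ by a single move of Figure~\ref{rts} or by a planar isotopy. I would fix the braiding algorithm $\mathcal{B}$ of~\cite{cpas} (adapted to carry bars along the strands) that underlies Theorem~\ref{Alexender type}, apply it to every $D_k$ to get a braid $b_k$ with $\widehat{b_k}$ equivalent to $D_k$ and with $b_0$, $b_m$ differing from $\beta_1$, $\beta_2$ only through choices internal to $\mathcal{B}$, and then reduce the theorem to two claims. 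First, \emph{independence of choices}: any two braids produced by $\mathcal{B}$ from one and the same singular twisted link diagram (for different base points, different subdivisions of the arcs, and different orderings in which the ``up-arcs'' get braided) are related by braid isotopy and finitely many ST1--ST4 moves. Second, \emph{moves lift}: if $D$ and $D'$ differ by a single move of Figure~\ref{rts} or a planar isotopy, then $\mathcal{B}(D)$ and $\mathcal{B}(D')$ are $L_v$-equivalent in the sense of Definition~\ref{def1}. Chaining these two claims along $D_0 \to \cdots \to D_m$ then yields that $\beta_1$ and $\beta_2$ are $L_v$-equivalent.

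The heart of the argument is the ``moves lift'' claim, which I would prove by localization: the move between $D$ and $D'$ takes place inside a disk $\Delta$; braid $D$ and $D'$ identically outside $\Delta$, and inside $\Delta$ distinguish finitely many local pictures according to how the strands through $\Delta$ meet the braid axis. When every strand is descending, the move turns into one of the defining relations~(\ref{rel-height-ss})--(\ref{1rel-twist-III}) of $STVB_n$ from Theorem~\ref{thm:StandardPresentation2}, hence into braid isotopy; when a ``return'' arc is present, braiding the two sides produces a pair that differs by an ST1-, ST2- or ST3-move, with the singular Reidemeister moves and the bar moves of Figure~\ref{rts} contributing precisely the cases absorbed by the $rs$-threaded move ST3 and by the singular, real and twist conjugations ST4. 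The classical, virtual and mixed cases can be quoted verbatim from~\cite{kl,KS}, the singular ones from~\cite{cpas}, and the bar ones from~\cite{NPK}; what is genuinely new is the bookkeeping when a bar or a singular crossing is threaded around the axis, and the verification that this never forces a move outside the list ST1--ST4. I expect this completeness check to be the main obstacle: it should be handled by noting that a bar threaded around the axis can always be pushed, via relations~(\ref{rel-bv}),~(\ref{rel-twist-III}) and~(\ref{1rel-twist-III}), to a spot where it participates only in a conjugation, i.e.\ a TST4- or RST4-move, whereas a singular crossing threaded around the axis is absorbed by ST3 together with the singular braid relations of Figure~\ref{smoves}.
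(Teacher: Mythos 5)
Your proposal follows essentially the same route as the paper: the easy direction by checking each ST-move preserves the closure, and the hard direction by lifting each Reidemeister move of Figure~\ref{rts} through the braiding algorithm of~\cite{cpas}, quoting~\cite{KS} and~\cite{cpas} for the classical, virtual and singular moves and reducing the genuinely new work to the bar moves T1, T2, T3 and S4, which the paper likewise handles by orientation case analysis (all-descending strands giving braid relations, upward strands giving ST1/ST4-moves). Your explicit ``independence of choices'' lemma is left implicit in the paper (it is delegated to the braiding algorithm of~\cite{cpas}), but the overall decomposition and the identification of where the new verifications lie coincide with the paper's proof.
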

\begin{proof}
If two singular twisted virtual braids are $L_v$-equivalent then they have isotopic closures.
    \begin{figure}[h]
  \centering
    \includegraphics[width=12cm,height=8cm]{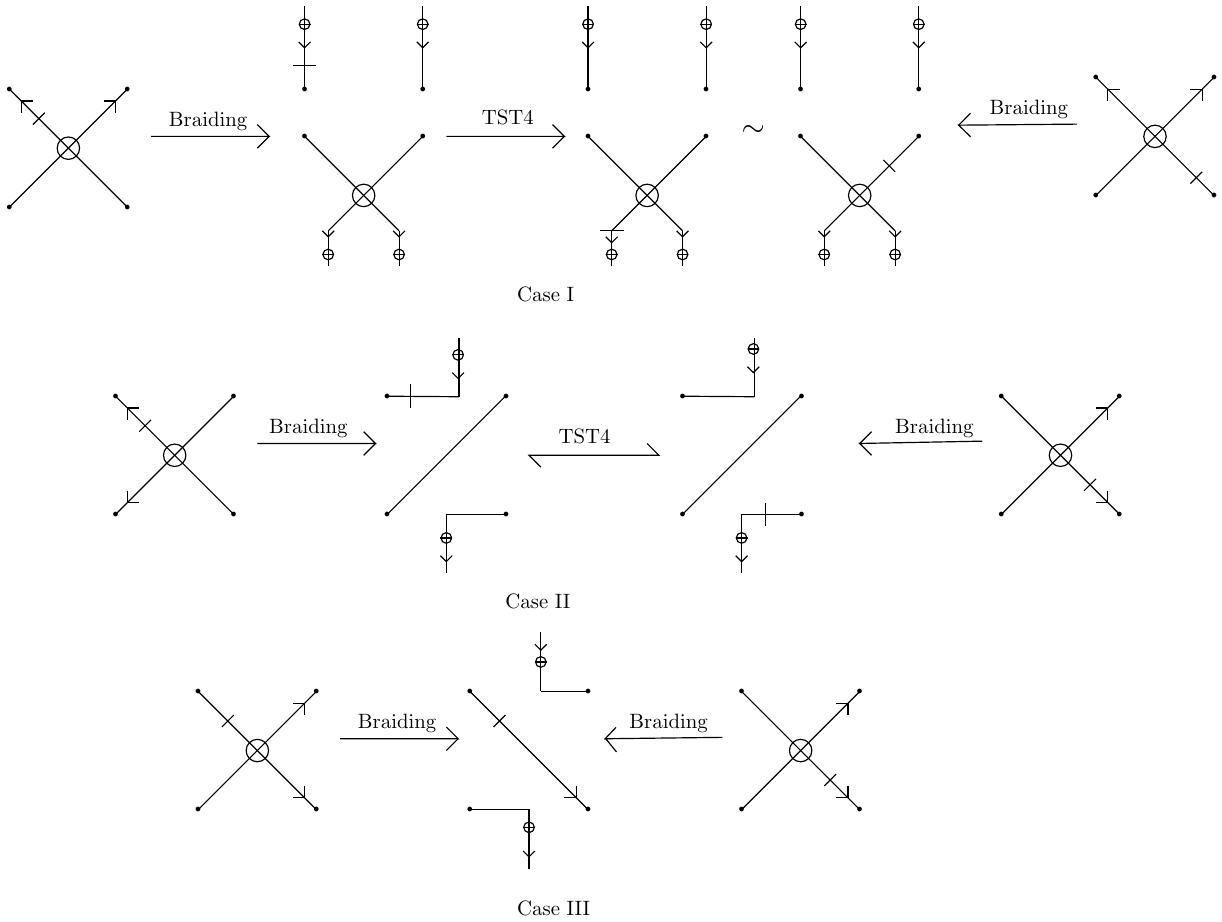}
        \caption{Different cases of T1 move}
        \label{st1move}
        \end{figure} 
For the converse, we aim to demonstrate that two singular twisted virtual braids with isotopic closures are related by $L_v$-equivalence. To achieve this, our task is to validate that singular twisted link diagrams, which differ due to Reidemeister moves, correspond to closures of singular twisted virtual braids that are $L_v$-equivalent. Starting with Reidemeister moves R1, R2, R3, V1, V2, V3, V4 moves, these moves has been checked in~\cite{KS} and S1, S2, S3 moves has been checked in~\cite{cpas}, hence it is enough to validate T1, T2, T3, and S4 moves from Reidemeister moves for singular twisted links. These moves need to be considered with any given orientation of the strands. We explore all instances of each isotopy move. If both the strands are in downward direction, then they are related by braid isotopy and braid moves. If at least one strand is oriented upward, then we proceed as follow:
 
     \begin{figure}[h]
  \centering
    \includegraphics[width=8cm,height=2cm]{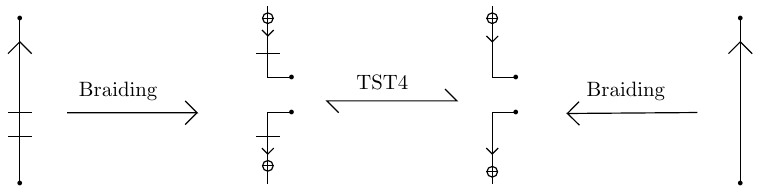}
        \caption{Braiding of T2 move}
        \label{st2move}
        \end{figure}  
   For T1 move and T2 move, all cases are illustrated pictorially in Figure~\ref{st1move} and Figure~\ref{st2move}, respectively.
   
     For T3 move, there are four cases. Case I, when both strands are in upward direction, we apply the braiding algorithm on the diagrams on both sides of the move (followed by braiding isotopy), the resulting diagrams are related by ST4-moves as shown in Figure~\ref{st3move}. Case II, when one strand is upward and one strand is downward as shown in Figure~\ref{st3bmove}, we apply the braiding algorithm on the diagrams on both sides of the move, the resulting diagrams are related by ST4-move and ST1-move.
     Case III is a different form of Case I as shown in Figure~\ref{st33move}.
    Case IV is a different form of Case II illustrated in  Figure~\ref{st34move}. 
    S4 move is same as T3 move.
      \begin{figure}[h]
  \centering
    \includegraphics[width=11cm,height=8cm]{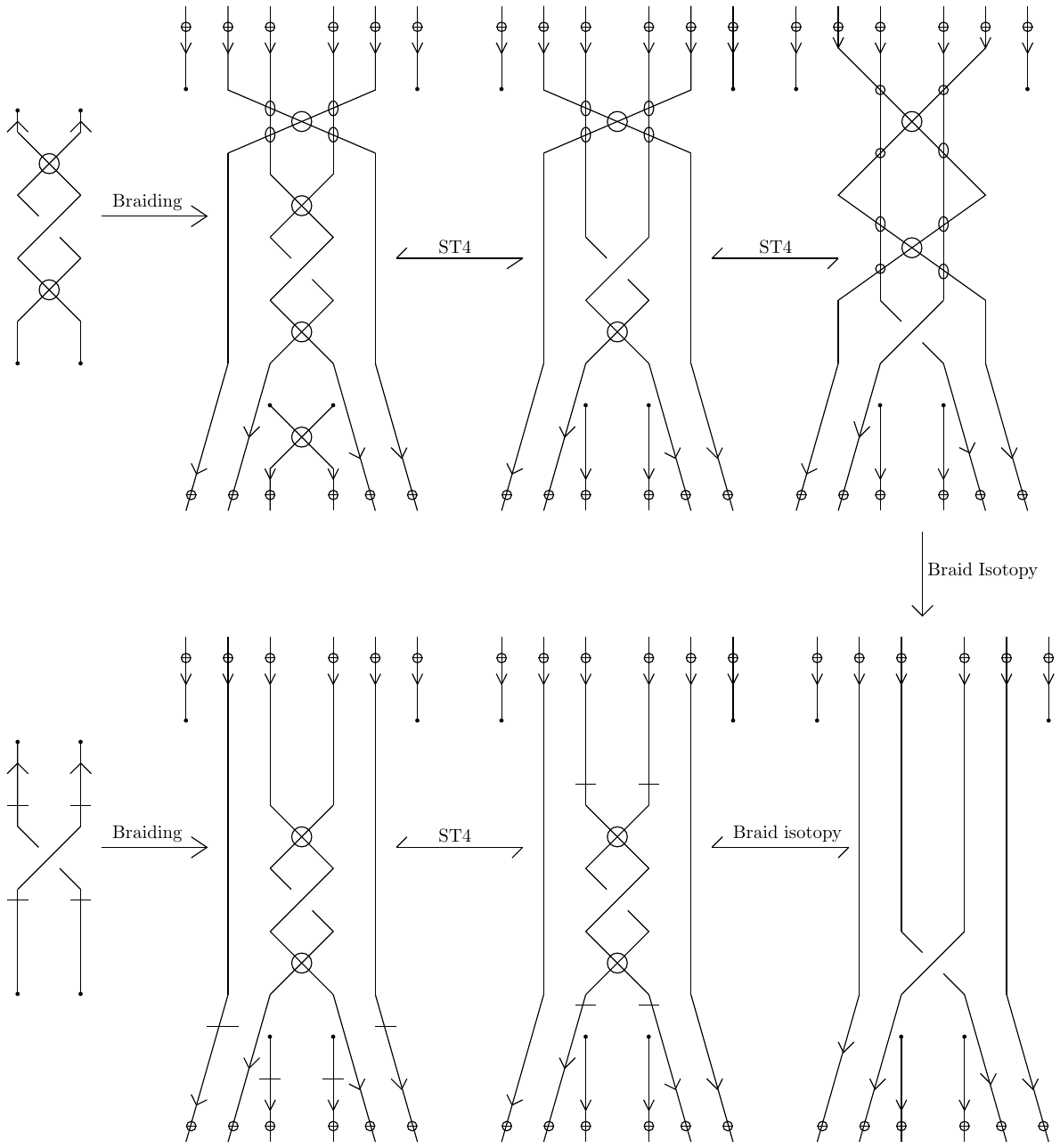}
        \caption{Case I of T3 move}
        \label{st3move}
        \end{figure} 
        \begin{figure}[h]
  \centering
    \includegraphics[width=11cm,height=8cm]{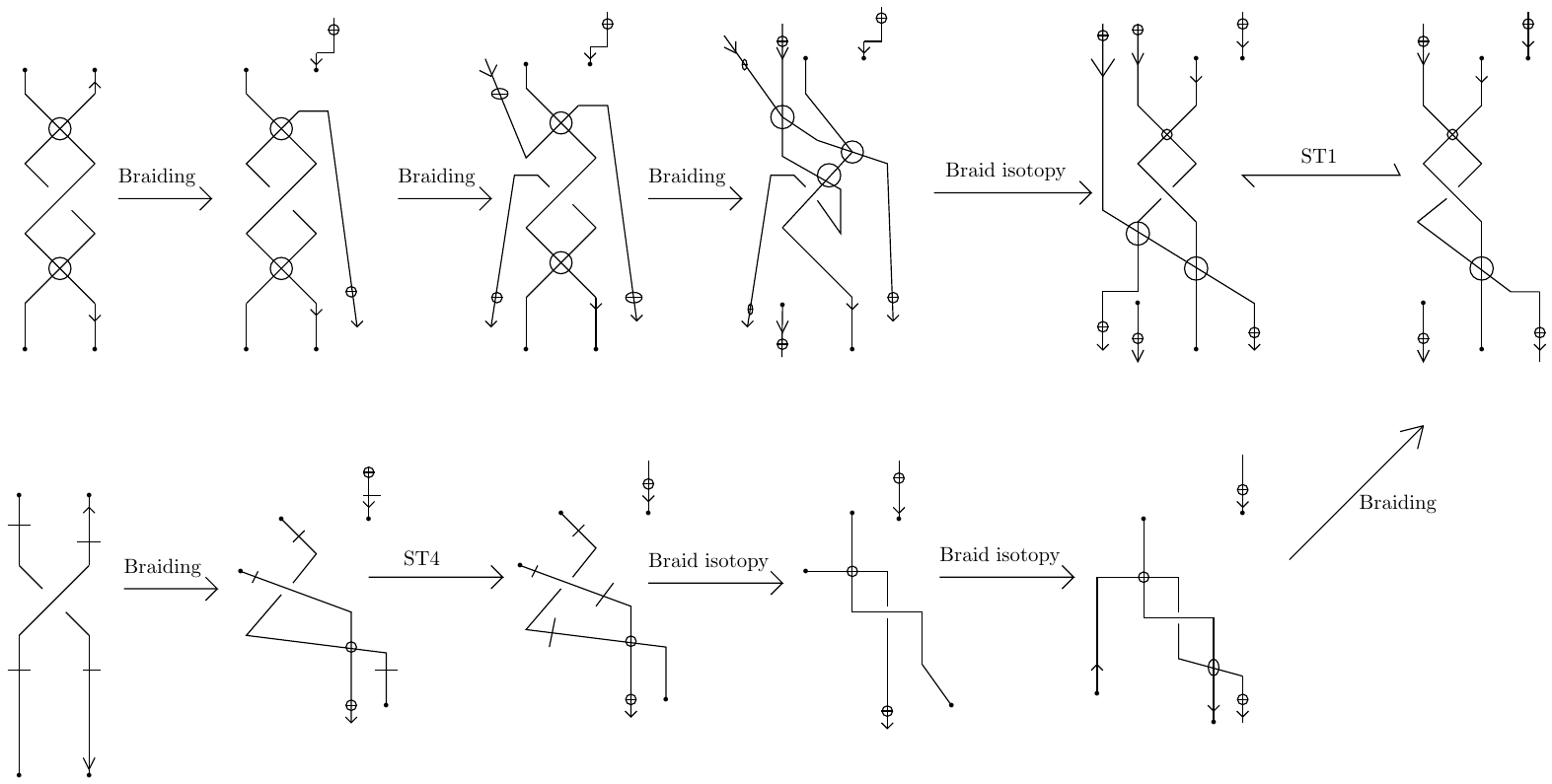}
        \caption{Case II of T3 move}
        \label{st3bmove}
        \end{figure} 
          \begin{figure}[h]
  \centering
    \includegraphics[width=9cm,height=5.5cm]{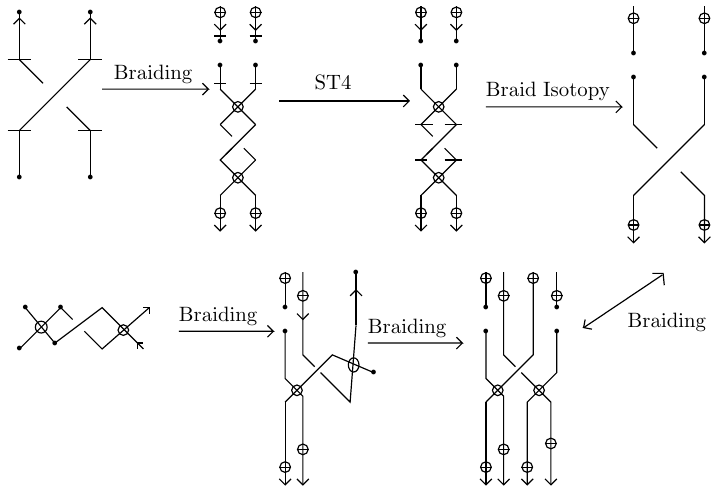}
        \caption{Case III of T3 move}
        \label{st33move}
        \end{figure} 
          \begin{figure}[h]
  \centering
    \includegraphics[width=7cm,height=5cm]{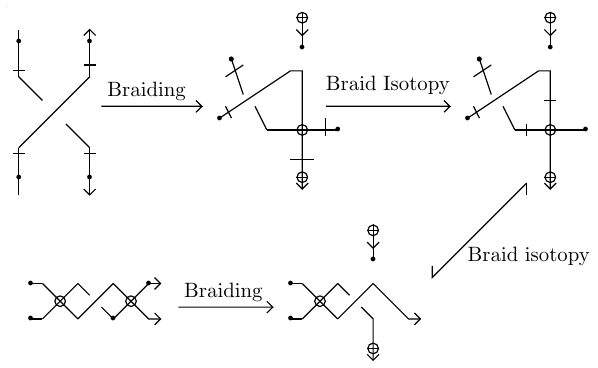}
        \caption{Case IV of T3 move}
        \label{st34move}
        \end{figure} 

\end{proof}
\begin{corollary}
     Two twisted virtual braids have isotopic closures if and only if they are twisted $L_v$-equivalent.
\end{corollary}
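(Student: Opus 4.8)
The plan is to deduce the corollary from the preceding theorem by restricting attention to the singularity-free case. First I would record the obvious identifications: a twisted virtual braid is exactly a singular twisted virtual braid that has no singular crossings, and its closure is a twisted link, which is a singular twisted link with no singular points. Under these identifications the ``only if'' direction is immediate, since every generating move of twisted $L_v$-equivalence (ST1, ST2, RST4, TST4) is in particular one of the $L_v$-moves ST1--ST4, and twisted virtual braid isotopy is a special case of singular twisted virtual braid isotopy; hence twisted $L_v$-equivalent braids already have isotopic closures as twisted links.

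For the converse, suppose two twisted virtual braids $b$ and $b'$ have isotopic closures as twisted links. Their closures are then related by a finite sequence of Reidemeister moves for twisted links, namely R1, R2, R3, V1, V2, V3, V4, T1, T2, T3; the moves S1--S4 never occur because no diagram in the isotopy carries a singular crossing. I would re-run the argument from the proof of the theorem over precisely this restricted list of moves, tracking which $L_v$-moves appear at each step. The R- and V-moves are handled exactly as in~\cite{KS} and contribute only ST1-, ST2- and RST4-moves. For T1 and T2 the braidings in Figure~\ref{st1move} and Figure~\ref{st2move} introduce no singular crossing and use only ST1-moves and RST4/TST4 conjugations. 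For T3 only Cases I--IV occur (both strands upward and its variant, one strand upward and its variant), and inspecting Figures~\ref{st3move}, \ref{st3bmove}, \ref{st33move}, \ref{st34move} the two braidings are related by ST4-moves together with ST1-moves; since none of the diagrams involved carries a singular crossing, the ST4-moves in question are RST4 or TST4, never the singular conjugation SST4, and no ST3-move is forced, because an $rs$-threaded $L_v$-move by definition requires a singular crossing adjacent to the threading strand. Hence the whole sequence connecting $b$ to $b'$ can be realized inside the class of twisted virtual braids using only ST1, ST2, RST4, TST4 and twisted virtual braid isotopy, i.e., $b$ and $b'$ are twisted $L_v$-equivalent.

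The main obstacle I anticipate is bookkeeping rather than conceptual: one must verify that every $L_v$-move invoked in braiding the twisted Reidemeister moves genuinely stays singularity-free, in particular that the ST4-conjugations appearing in the T3 cases are RST4 or TST4 and not SST4, and that no ST3-move is needed. The underlying reason making this check succeed is that the braiding algorithm of~\cite{cpas}, applied to a diagram without singular crossings, produces a braid without singular crossings, and the $L_v$-moves used to reconcile two such braidings can always be chosen singularity-free. I would state this as the conceptual justification and then complete the proof by the case-by-case inspection of the figures already drawn for the theorem, restricted to twisted (non-singular) strands.
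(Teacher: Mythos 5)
Your proposal is correct and follows essentially the route the paper intends: the corollary is stated without proof as the singularity-free specialization of the preceding theorem, and your argument---that the braiding of R-, V-, T1-, T2- and T3-moves applied to diagrams without singular crossings stays singularity-free and therefore uses only ST1, ST2, RST4 and TST4 (never ST3 or SST4, which by definition involve a singular crossing)---is exactly the check the paper leaves implicit. Your explicit bookkeeping of which $L_v$-moves arise in each case is, if anything, more careful than the paper itself.
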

\subsection{Algebraic Markov type theorem for singular twisted virtual braids}
Consider a singular twisted virtual braid $b$ of degree $n$. Define a monomorphism $\iota_s^t: STVB_n \to STVB_{n+s+t}$, where $s$ and $t$ are non-negative integers such that $\iota_s^t(b)$ is a singular twisted virtual braid of degree $n + s + t$ obtained from $b$ by adding $s$ trivial strands to the left and $t$ trivial strands to the right.
\begin{theorem}\label{algebraic}
    Two singular twisted virtual braids have isotopic closures if and only if they differ by a finite sequence of braid relations in $STVB_\infty$ together with the following moves.
    \begin{itemize}
        \item[(i)] Real conjugation: $\sigma_i^{\pm 1} \alpha \sim \alpha \sigma_i^{\pm 1}$; virtual conjugation: $v_i \alpha \sim \alpha v_i$;\\
        singular conjugation: $\tau_i \alpha \sim \alpha \tau_i$; and twist conjugation: $\gamma_i \alpha \sim \alpha \gamma_i$,
        \item[(ii)] Right real and right virtual stabilization: $\iota^1_0(\alpha) \sigma_n^{\pm 1} \sim \alpha \sim \iota^1_0(\alpha) v_n$,
        \item[(iii)] Right algebraic under-threading: $\alpha \sim \iota^1_0(\alpha) \sigma_n^{\pm 1}v_{n-1}\sigma_n^{\mp 1}$; and \\
        left algebraic under-threading: $\alpha \sim \iota^0_1(\alpha)\sigma_1^{\pm 1}v_2\sigma_1^{\mp 1}$, 
        \item[(iv)] Right algebraic rs-threading: $\iota^1_0(\alpha) \tau_n v_{n-1}\sigma_n^{\pm 1} \sim \iota^1_0(\alpha) \sigma_n^{\pm 1}v_{n-1}\tau_n$; and \\
        left algebraic rs-threading: $\iota^0_1(\alpha) \tau_1 v_{2}\sigma_1^{\pm 1} \sim \iota^0_1(\alpha) \sigma_1^{\pm 1}v_{2}\tau_1$.
    \end{itemize}
\end{theorem}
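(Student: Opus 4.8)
The plan is to derive this algebraic statement from the Markov-type theorem just proved, namely that two singular twisted virtual braids have isotopic closures if and only if they differ by singular twisted virtual braid isotopy and a finite sequence of the moves ST1--ST4. By Theorem~\ref{thm:StandardPresentation2}, braid isotopy of singular twisted virtual braids is exactly the equivalence relation generated on $STVB_\infty$ by the braid relations. Hence it suffices to show that the equivalence relation on $STVB_\infty$ generated by the braid relations together with the moves (i)--(iv) coincides with the one generated by the braid relations together with ST1--ST4; I would prove this by showing each set of moves lies in the equivalence generated by the other.

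For the easy inclusion, reading the braid words off the figures shows that the real, singular and twist conjugations in (i) are precisely the moves RST4, SST4 and TST4, while virtual conjugation $v_i\alpha \sim \alpha v_i$ is obtained from the other $L_v$-moves as recorded in the Remark; the right real and right virtual stabilizations in (ii) are RST1 and VST1 on the last strand; the right and left under-threadings in (iii) are RST2 and LST2; and the right and left $rs$-threadings in (iv) are RST3 and LST3. Each identification is a direct comparison of local tangle pictures, up to a handful of braid relations, so a fortiori moves (i)--(iv) preserve the isotopy class of the closure.

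For the other inclusion I would show that every ST1--ST4 move, applied at an arbitrary point of an arbitrary braid, is a consequence of (i)--(iv) and the braid relations. The moves ST4 are literally the conjugations listed in (i). For ST1--ST3 the key observation is that the four conjugation moves of (i), applied one generator at a time, give $w\beta \sim \beta w$ for all $w,\beta \in STVB_\infty$ --- full cyclic invariance of braid words --- because $g\beta \sim \beta g$ for each standard generator $g$. Using this together with the virtual braid relations to slide the strand newly created by an $L_v$-move to the rightmost (or leftmost) position, an interior ST1-move reduces to a right real/virtual stabilization (ii), an interior ST2-move to a right or left under-threading (iii), and an interior ST3-move to a right or left $rs$-threading (iv); both the left and the right variants in (iii) and (iv) have to be kept, since, exactly as for virtual braids~\cite{KS,cpas}, a left threading cannot be recovered from the right one by conjugation alone. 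All the mixed-orientation cases that arise when such moves are analysed geometrically have already been handled in the proof of the preceding theorem, so no further case analysis is needed at the algebraic level.

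The principal obstacle is the bookkeeping in this last step: one must verify carefully that an $L_v$-move at an interior point of a braid truly becomes the corresponding end-strand move (ii)--(iv) after a sequence of conjugations (i) and braid relations, i.e.\ track how the two crossings (or the singular crossing) produced by the move commute past the remainder of the braid, and confirm that no move outside (i)--(iv) intervenes. The singular and twisted decorations add only mild extra work, because $\tau_i$ and $\gamma_i$ commute with every standard generator not acting on strands $i,i+1$ by relations~(\ref{rel-height-vv2})--(\ref{1rel-twist-III}), so the sliding and rotation arguments go through essentially as in the purely virtual case.
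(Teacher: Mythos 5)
Your proposal is correct and follows essentially the same route as the paper: the theorem is deduced from the geometric $L_v$-move Markov theorem by identifying the algebraic moves (i)--(iv) with the ST1--ST4 moves and showing the two equivalence relations on $STVB_\infty$ coincide. The paper's own proof is far terser --- it only states that each algebraic move is a consequence of $L_v$-moves, deferring everything except twist conjugation to~\cite{cpas} --- so your explicit treatment of both inclusions, in particular the reduction of interior $L_v$-moves to end-strand moves via conjugation, is the argument the paper is implicitly relying on.
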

    To prove this theorem it is sufficient to show that each algebraic Markov move can be obtained from a finite sequence of $L_v$-moves, which has already been proved in~\cite{cpas} except for the twist conjugation. It is easy to observe that the twist conjugation is a part of the $L_v$-moves.

\begin{corollary}
    Two twisted virtual braids have isotopic closures if and only if they differ by a finite sequence of braid relations in $TVB_\infty$ together with the following moves.
    \begin{itemize}
        \item[(i)] Real conjugation : $\sigma_i^{\pm 1} \alpha \sim \alpha \sigma_i^{\pm 1}$; virtual conjugation: $v_i \alpha \sim \alpha v_i$; and \\ 
 twist conjugation: $\gamma_i \alpha \sim \alpha \gamma_i$,
        \item[(ii)] Right real and right virtual stabilization: $\iota^1_0(\alpha) \sigma_n^{\pm 1}\sim \alpha \sim \iota^1_0(\alpha)v_n $,
        \item[(iii)] Right algebraic under-threading: $\alpha \sim \iota^1_0(\alpha) \sigma_n^{\mp 1}v_{n-1}\sigma_n^{\pm 1}$ and \\
        left algebraic under-threading: $\alpha \sim \iota^0_1(\alpha)\sigma_1^{\pm 1}v_2\sigma_1^{\mp 1}$. 
    \end{itemize}    
\end{corollary}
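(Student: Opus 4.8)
The plan is to obtain the corollary by specialising Theorem~\ref{algebraic} to the subclass of twisted virtual braids --- equivalently, by feeding the earlier corollary (two twisted virtual braids have isotopic closures if and only if they are twisted $L_v$-equivalent) through the same translation between $L_v$-moves and algebraic moves that proves Theorem~\ref{algebraic}. First I would record the identifications that make this work: a twisted virtual braid is precisely a singular twisted virtual braid with no singular crossing, so $TVB_\infty$ is the submonoid of $STVB_\infty$ of words not involving any $\tau_i$; the closure of a twisted virtual braid is a twisted link (a singular twisted link with no singular double point); the braiding algorithm of~\cite{cpas} returns a twisted virtual braid from a twisted link; and, by the remark following Theorem~\ref{thm:StandardPresentation2}, ``twisted virtual braid isotopy'' is exactly the equivalence generated by the braid relations (\ref{rel-height-ss})--(\ref{rel-twist-III}), none of which involves a $\tau_i$.

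The heart of the argument is then the move-by-move translation, carried out exactly as for Theorem~\ref{algebraic}: the virtual and real $L_v$-moves ST1 correspond to the right virtual and right real stabilisations in~(ii); the under-threaded $L_v$-moves ST2 correspond to the right and left algebraic under-threadings in~(iii) (the sign pattern $\sigma_n^{\mp1}v_{n-1}\sigma_n^{\pm1}$ there describing the same family as in Theorem~\ref{algebraic}); RST4 corresponds to real conjugation, from which virtual conjugation and the left-hand stabilisations follow by the braid relations among $\sigma_i^{\pm1},v_i,\gamma_i$ as in~\cite{KS}; and TST4 corresponds to twist conjugation. For the non-twist parts this is the content of~\cite{cpas,KS}, while the twist-conjugation correspondence is immediate from the picture of the move; and each translation uses only braids in $TVB_\infty$, since no relation or $L_v$-move in play creates a singular crossing. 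A clean way to see that the remaining moves of Theorem~\ref{algebraic} are not needed is that the number of $\tau_i$-letters in a word is constant under every braid relation (\ref{rel-height-ss})--(\ref{1rel-twist-III}) and under every move (i)--(iv); hence a chain between two twisted virtual braids has $\tau$-count $0$ throughout, so it lives in $TVB_\infty$ and can never invoke singular conjugation or $rs$-threading. The converse direction --- that each listed move preserves the isotopy class of the closure --- is inherited from Theorem~\ref{algebraic}, every such move being a composition of closure-preserving $L_v$-moves.

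I expect the proof to be essentially bookkeeping: the correspondences of ST1 with~(ii), of ST2 with~(iii), and of RST4 with real conjugation are already recorded in~\cite{cpas,KS}, and the only genuinely new ingredient is the correspondence of TST4 with twist conjugation, which the move's definition makes transparent. The one point that deserves care --- the main (and minor) obstacle --- is checking that the derived moves, namely virtual conjugation and the left-hand stabilisations, can be produced from their right-hand counterparts together with twisted virtual braid isotopy without passing through any diagram that carries a singular crossing; this is settled by a direct inspection of the slide relations among $\sigma_i^{\pm1}$, $v_i$ and $\gamma_i$, so no new difficulty arises.
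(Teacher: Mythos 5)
Your proposal is correct and follows the route the paper intends: the corollary is stated there without proof, as the specialization of Theorem~\ref{algebraic} (equivalently, of the twisted $L_v$-equivalence corollary) to braids with no singular crossings. Your $\tau$-count invariance argument, showing the whole chain of moves stays inside $TVB_\infty$ so that singular conjugation and $rs$-threading never arise, is exactly the bookkeeping the paper leaves implicit, and it is sound since every relation and move in Theorem~\ref{algebraic} preserves the number of $\tau_i$-letters.
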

In the following subsection, we have demonstrated the potential to minimize the number of $L_v$-moves and algebraic moves in the context of singular twisted virtual braids.
\subsection{On threading moves of singular twisted virtual braids}
It turns out that if two singular twisted virtual braids are related by a left algebraic under-threading move (left algebraic $rs$-threading move) then they are related by a sequence of conjugation and a right algebraic under threading move (right algebraic $rs$-threading move). Similarly, if two singular twisted virtual braids are related by a LST2-move (LST3-move) then they are related by a ST4-move and a RST2-move (RST3-move). Thus we can remove left algebraic under threading move and left algebraic $rs$-threading move from the definition of Markov equivalence and also we can remove LST2-move and LST3-move from the definition of $L_v$ equivalence. Details are provided in the following theorems.
\begin{theorem}\label{LRST}
    If two twisted virtual braids of degree $n$ are related by a left algebraic under-threading move (left algebraic $rs$-threading move), then they are related by a sequence of conjugation and a right algebraic under-threading move (right algebraic $rs$-threading move).
\end{theorem}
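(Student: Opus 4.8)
The plan is to establish the two statements (the under‑threading case and the $rs$‑threading case) by the same algebraic mechanism: convert a "left" threading performed at strand $1$ into a "right" threading performed at strand $n$ by sliding the small threaded tangle across the whole braid using a conjugation. First I would fix notation. Given $\alpha \in TVB_n$, a left algebraic under‑threading replaces $\alpha$ by $\iota_1^0(\alpha)\,\sigma_1^{\pm 1} v_2 \sigma_1^{\mp 1}$, a braid of degree $n+1$ in which the newly created strand sits on the far left; a right algebraic under‑threading replaces $\alpha$ by $\iota_0^1(\alpha)\,\sigma_n^{\pm 1} v_{n-1}\sigma_n^{\mp 1}$, with the new strand on the far right. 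The key observation is that the word $\sigma_1^{\pm 1} v_2 \sigma_1^{\mp 1}$ (a localized tangle involving only strands $1,2,3$ of the $(n+1)$‑strand braid) can be moved to the opposite end of the braid at the cost of a virtual conjugation: one virtually transports the extra strand from position $1$ past the strands occupied by the image of $\alpha$ until it becomes the strand in position $n+1$, which is exactly what a sequence of virtual‑commutation relations (\ref{rel-height-vv})–(\ref{rel-vvv}), together with the mixed relations (\ref{rel-height-sv}), (\ref{rel-vsv}) and their $\gamma$‑analogues, accomplishes, and this transport is absorbed by the allowed virtual conjugation move $v_i\alpha\sim\alpha v_i$.

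The steps, in order, would be: (1) Write the left‑threaded braid as $\iota_1^0(\alpha)\,w_L$, where $w_L=\sigma_1^{\pm 1}v_2\sigma_1^{\mp 1}$, and note $\iota_1^0(\alpha)$ only involves strands $2,\dots,n+1$, hence commutes with the part of $w_L$ that does not touch strand $1$ only after the transport is set up — so instead I would first conjugate by an explicit virtual "shift" permutation braid $\Delta$ that carries strand $1$ to position $n+1$ while preserving the cyclic order of the rest. (2) Check, using the braid relations in $TVB_\infty$ listed in Theorem~\ref{thm:StandardPresentation2} (for the twisted virtual braid group one uses (\ref{rel-height-ss})–(\ref{rel-twist-III})), that $\Delta^{-1}\,\iota_1^0(\alpha)\,w_L\,\Delta = \iota_0^1(\alpha')\,w_R$ where $w_R=\sigma_n^{\pm 1}v_{n-1}\sigma_n^{\mp 1}$ and $\alpha'$ is $\alpha$ itself (the conjugating shift acts trivially on the $\alpha$‑block since it is a virtual relabeling that, after passing the braid relations, returns the original word). (3) Conclude that the two braids differ by a conjugation (by $\Delta$, which decomposes into a product of $v_i$'s, hence a finite sequence of virtual conjugations) followed by a right algebraic under‑threading, which is the claim. (4) For the $rs$‑threading statement, repeat verbatim with $w_L=\tau_1 v_2\sigma_1^{\pm1}$ versus $w_L'=\sigma_1^{\pm1}v_2\tau_1$ on the left and $w_R=\tau_n v_{n-1}\sigma_n^{\pm1}$ versus $w_R'=\sigma_n^{\pm1}v_{n-1}\tau_n$ on the right, using additionally the relations (\ref{rel-height-vv2})–(\ref{1rel-twist-III}) governing $\tau_i$; the transport argument is identical because $\tau_i$ satisfies the same far‑commutation and mixed relations as $\sigma_i$.

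The main obstacle I expect is bookkeeping in step (2): one must verify that conjugating by the virtual shift $\Delta$ does not leave behind any uncancelled generators in the $\alpha$‑block and that the localized tangle $w_L$ is carried precisely onto $w_R$ (not onto $w_R$ with a spurious virtual crossing or a reversed sign). This is where the relations $v_i\sigma_{i+1}v_i=v_{i+1}\sigma_i v_{i+1}$ and $\gamma_{i+1}v_i=v_i\gamma_i$ do the real work, and the cleanest way to handle it is pictorially — draw the extra strand being pulled virtually across the braid box and read off that the crossings $\sigma^{\pm 1}$, $v$, $\tau$ in $w_L$ slide rigidly to the right end. A secondary, smaller point is orientation: the excerpt insists the moves be checked "with any given orientation of the strands," but since these are algebraic (braid‑word) identities the orientations are fixed by the braid's downward monotonicity, so no case split is needed here; one simply records that the signs $\pm$ in $w_L$ match those in $w_R$ after the shift. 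Once the picture is drawn, translating it into the relation calculus of Theorem~\ref{thm:StandardPresentation2} is routine.
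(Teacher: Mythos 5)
Your overall strategy---turn the left threading into the right threading by conjugating with an explicit permutation-type virtual braid---is the same as the paper's, but the conjugating element you chose does not work, and the point where it fails is exactly the step you flag as "bookkeeping." A cyclic shift $\Delta$ sending strand $1$ to position $n+1$ while preserving the cyclic order of the others sends positions $1,2,3$ to $n+1,1,2$. The tangle $w_L=\sigma_1^{\pm1}v_2\sigma_1^{\mp1}$ lives on positions $1,2,3$, so its conjugate by $\Delta$ is a crossing pattern joining positions $n+1$ and $1$, which are not adjacent; already at the level of underlying permutations, $w_L$ projects to the transposition $(1\,3)$, whose conjugate under your shift is $(n+1,\,2)$, whereas $w_R=\sigma_n^{\pm1}v_{n-1}\sigma_n^{\mp1}$ projects to $(n-1,\,n+1)$. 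These agree only when $n=3$, so $\Delta^{-1}w_L\Delta\neq w_R$ in general and step (2) of your plan is false as stated. You cannot simultaneously have "$\alpha'$ is $\alpha$ itself" and "$w_L$ is carried onto $w_R$": the shift that fixes the $\alpha$-block breaks the tangle, and the permutation that carries the tangle correctly does not fix $\alpha$.

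The paper resolves this by conjugating with the order-reversing flip $i\mapsto n+2-i$ rather than a cyclic shift. The flip sends adjacent positions to adjacent positions, so it carries $w_L$ precisely onto $w_R$ (and $\tau_1 v_2\sigma_1^{\pm1}$ onto $\tau_n v_{n-1}\sigma_n^{\pm1}$), but it transforms the $\alpha$-block into its mirror $f_n(\alpha)$, where $f_n$ is the automorphism $\sigma_i\mapsto\sigma_{n-i}$, $v_i\mapsto v_{n-i}$, $\tau_i\mapsto\tau_{n-i}$, $\gamma_i\mapsto\gamma_{n-i+1}$. The missing ingredient, which your proposal has no substitute for, is the lemma that $f_n$ is \emph{inner}: the paper exhibits $b^*=\prod_{i=1}^{n-1}(v_iv_{i-1}\cdots v_1)\prod_{j=1}^{n}\gamma_j$ with $(b^*)^2=e$ and $b^*b(b^*)^{-1}=f_n(b)$, so that $\alpha$ and $f_n(\alpha)$ are related by conjugation. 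Note that the bars $\gamma_j$ in $b^*$ are essential in the twisted setting: conjugating $\sigma_i$ by the purely virtual half-twist does not return a generator because of relation~(\ref{rel-twist-III}), $v_i\sigma_iv_i=\gamma_{i+1}\gamma_i\sigma_i\gamma_i\gamma_{i+1}$, and the $\gamma_j$'s are precisely what absorbs this discrepancy. With that lemma in hand the chain is $\alpha\sim f_n(\alpha)\sim f_{n+1}(b_2)\sim b_2$, which is the paper's argument; without it, your proof does not close.
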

\begin{proof}
    
Consider a monoid isomorphism $f_n: STVB_n \to STVB_n$ determined by 
\begin{align*}
\sigma_i & \mapsto \sigma_{n-i}, & \text{for } & i=1, \dots, n-1 \\ 
v_i & \mapsto v_{n-i}, & \text{for } & i=1, \dots, n-1 \\ 
\gamma_i & \mapsto \gamma_{n-i+1}, & \text{for } & i=1, \dots, n\\
\tau_i & \mapsto \tau_{n-i}, & \text{for } & i=1, \dots, n-1 . 
\end{align*}

Where, $f_n(b)$ a singular twisted virtual braid diagram obtained from the diagram $b$ by applying the above correspondence to the expression in terms of the generators. 

Let $b^* \in STVB_n$ such that
\begin{align*}
b^*  =  \prod_{i=1}^{n-1} (v_i v_{i-1} \dots v_1) \prod_{j=1}^{n} \gamma_j. 
\end{align*}

Define $F_n: STVB_n \to STVB_n$ such that
\begin{align*}
b & \mapsto b^* b {b^*}^{-1}  & \text{for } & b \in STVB_n. 
\end{align*}
It turns out to be a monoid isomorphism. It is easily seen that ${b^*}^2 = e$ and $F_n(b) = f_n(b)$ for $b \in STVB_n$.  
In particular $b$ and $f_n(b)$ are related by conjugation.

It is observable that the map $f_n$, facilitating the transition from the left diagram to the right one (or vice-versa), is indeed a conjugation map $F_n$. We use this observation to support our assertion. Further elaboration is provided below:

Let $\alpha$ and $b_2$ be singular twisted virtual braid diagrams related by a left algebraic under-threading move. 
Suppose that 
$$ b_1= \iota^0_1(\alpha) \quad \mbox{and} \quad 
b_2=    \alpha \sigma_1 v_2 \sigma_1^{-1}. $$  
Then 
$$ f_{n+1}(b_1) = \iota^1_0(f_{n}(\alpha))
\quad \mbox{and} \quad 
f_{n+1}(b_2) = f_{n}(\alpha) \sigma_n v_{n-1} \sigma_n^{-1}, $$ 
and hence $f_{n}(\alpha)$ and $f_{n+1}(b_2)$ are related by a right algebraic under threading move.   
Since $b_1$ is conjugate to $f_n(b_1)$ as elements of $STVB_n$ it implies $\alpha$ is conjugate to $f_{n}(\alpha)$. Also,
$b_2$ is conjugate to $f_{n+1}(b_2)$. Hence, $\alpha$ and $b_2$ are related by a sequence of conjugation and right algebraic under-threading move. 

Now, suppose $b_3$ and $b_4$ be singular twisted virtual braid diagrams related by a left algebraic $rs$-threading move. 
Suppose that
$$ b_3= \iota^0_1(\alpha) \tau_1 v_{2}\sigma_1 \quad \mbox{and} \quad 
b_4= \iota^0_1(\alpha) \sigma_1v_{2}\tau_1. $$  
Then 
$$ f_{n+1}(b_3) = \iota^1_0(f_{n}(\alpha))\tau_n v_{n-1}\sigma_n
\quad \mbox{and} \quad 
f_{n+1}(b_4) = \iota^1_0(f_{n}(\alpha) )\sigma_n v_{n-1} \tau_n, $$ 
and hence $f_{n+1}(b_3)$ and $f_{n+1}(b_4)$ are related by a right algebraic $rs$-threading move.   
Therefore, $b_3$ and $b_4$ are related by a sequence of conjugation and right algebraic  $rs$-threading move. 
\end{proof}
\begin{theorem}
    Two singular twisted virtual braids have isotopic closures if and only if they differ by a finite sequence of braid relations in $STVB_\infty$ together with the following moves.
    \begin{itemize}
        \item Real conjugation: $\sigma_i^{\pm 1} \alpha \sim \alpha \sigma_i^{\pm 1}$; virtual conjugation: $v_i \alpha \sim \alpha v_i$; \\
        singular conjugation: $\tau_i \alpha \sim \alpha \tau_i$; and twist conjugation: $\gamma_i \alpha \sim \alpha \gamma_i$,
        \item Right real and right virtual stabilzation: $\alpha \sigma_n^{\pm 1} \sim \alpha \sim \alpha v_n$,
        \item Right algebraic under-threading: $\alpha \sim \alpha \sigma_n^{\mp 1}v_{n-1}\sigma_n^{\pm 1}$,
        \item Right algebraic rs-threading: $\alpha \tau_n v_{n-1}\sigma_n^{\pm 1} \sim \alpha \sigma_n^{\pm 1}v_{n-1}\tau_n$.
    \end{itemize}
\end{theorem}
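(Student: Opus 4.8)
The plan is to obtain this statement as a corollary of Theorem~\ref{algebraic} together with Theorem~\ref{LRST}; the only real content is that the two left-handed threading moves appearing in Theorem~\ref{algebraic} become superfluous once conjugation and the corresponding right-handed moves are available.

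For the ``if'' direction there is essentially nothing to do beyond observing that every move in the list---real, virtual, singular, and twist conjugation, right real/virtual stabilization, right algebraic under-threading (with the sign pair $\mp,\pm$ understood to range over both choices), and right algebraic $rs$-threading---already occurs among the moves of Theorem~\ref{algebraic}, and that braid relations in $STVB_\infty$ do not change the closure; hence the ``if'' part of Theorem~\ref{algebraic} applies directly.

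For the ``only if'' direction I would start from the sequence furnished by Theorem~\ref{algebraic}: if $b$ and $b'$ have isotopic closures, they are joined by finitely many braid relations, conjugations (of all four types), right stabilizations, right and left algebraic under-threading moves, and right and left algebraic $rs$-threading moves. It then suffices to rewrite each occurrence of a left threading move in this sequence. This is exactly the content of Theorem~\ref{LRST}: writing $b_1=\iota^0_1(\alpha)$ and $b_2=\iota^0_1(\alpha)\sigma_1^{\pm1}v_2\sigma_1^{\mp1}$ for a left algebraic under-threading move, the monoid isomorphism $F_{n+1}$ (equivalently $f_{n+1}$, which is conjugation by the involution $b^{*}$) together with the compatibility $f_{n+1}\circ\iota^0_1=\iota^1_0\circ f_n$ gives $f_{n+1}(b_1)=\iota^1_0(f_n(\alpha))$ and $f_{n+1}(b_2)=f_n(\alpha)\sigma_n^{\pm1}v_{n-1}\sigma_n^{\mp1}$, which are related by a right algebraic under-threading move; since $b_1\sim f_{n+1}(b_1)$ and $b_2\sim f_{n+1}(b_2)$ by conjugation, the left move gets replaced by conjugations plus one right move. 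The identical argument with $b_3=\iota^0_1(\alpha)\tau_1 v_2\sigma_1^{\pm1}$ and $b_4=\iota^0_1(\alpha)\sigma_1^{\pm1}v_2\tau_1$ disposes of the left algebraic $rs$-threading move. Substituting these rewrites throughout the Markov chain leaves a sequence that uses only braid relations, conjugations, right stabilizations, right algebraic under-threading, and right algebraic $rs$-threading, which is the assertion.

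The step I expect to require the most care is the bookkeeping around degrees: one must check that the isomorphisms $f_n$ used at different levels intertwine correctly with the stabilization inclusions $\iota_s^t$, so that splicing a ``left move $=$ conjugation $+$ right move'' replacement into the middle of a long Markov chain does not disturb the strands not participating in the move. This is precisely what the global formulation of Theorem~\ref{LRST} via $F_n$ (with ${b^{*}}^2=e$) secures; and although Theorem~\ref{LRST} is phrased for twisted virtual braids, its proof is written in $STVB_n$ and the defining correspondence for $f_n$ already includes the singular generators $\tau_i$, so it applies verbatim here. No new geometric input beyond Theorems~\ref{algebraic} and~\ref{LRST} is needed.
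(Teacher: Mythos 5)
Your proposal is correct and follows exactly the paper's approach: the paper's entire proof is ``Use Theorem~\ref{algebraic} and Theorem~\ref{LRST},'' and your argument simply fills in the details of how the left algebraic threading moves from Theorem~\ref{algebraic} are replaced by conjugations plus right threading moves via Theorem~\ref{LRST}. The care you flag about intertwining $f_n$ with the inclusions $\iota_s^t$ is a reasonable point, but it is handled implicitly in the paper just as you describe.
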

\begin{proof}
Use Theorem~\ref{algebraic} and Theorem~\ref{LRST}.
\end{proof}
\begin{corollary}
    Two twisted virtual braids have isotopic closures if and only if they differ by a finite sequence of braid relations in $TVB_\infty$ together with the following moves.
    \begin{itemize}
        \item[(i)] Real conjugation:  $\sigma_i^{\pm 1} \alpha \sim \alpha \sigma_i^{\pm 1}$; virtual conjugation: $v_i \alpha \sim \alpha v_i$; and twist conjugation: $\gamma_i \alpha \sim \alpha \gamma_i$,
        \item[(ii)] Right real and right virtual stabilization: $\alpha \sigma_n^{\pm 1} \sim \alpha \sim \alpha v_n $,
        \item[(iii)] Right algebraic under-threading: $\alpha \sim \alpha \sigma_n^{\mp 1}v_{n-1}\sigma_n^{\pm 1}$.
    \end{itemize}    
\end{corollary}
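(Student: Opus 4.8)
The plan is to obtain this corollary by restricting the theorem immediately preceding it — the reduced algebraic Markov theorem for singular twisted virtual braids — to the submonoid $TVB_\infty\subset STVB_\infty$ consisting of the diagrams without singular crossings, equivalently the words in the standard generators that contain no letter $\tau_i$. A twisted virtual braid is precisely such a $\tau$-free braid; its closure, viewed as a singular twisted link, is a twisted link, so if two twisted virtual braids have isotopic closures then in particular their closures are isotopic as singular twisted links, and the preceding theorem applies and produces a finite sequence of braid relations in $STVB_\infty$ together with real, virtual, singular and twist conjugation, right real/virtual stabilization, right algebraic under-threading, and right algebraic $rs$-threading that connects the two braids.

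The main step is to confine such a sequence to $\tau$-free words and to read off which moves actually occur. For this I would observe that \emph{every} move on that list which mentions a singular crossing can only be performed when a $\tau$-letter is already present: each defining relation of $STVB_n$ containing some $\tau_i$ — these are exactly the relations (\ref{rel-height-vv2})--(\ref{1rel-twist-III}) — carries a $\tau$-letter on \emph{each} side, so as a two-sided substitution it can neither create nor destroy the property of being $\tau$-free; the singular conjugation move $\tau_i\alpha\sim\alpha\tau_i$ presupposes that $\tau_i$ occurs in the word; and the right algebraic $rs$-threading move has a $\tau_n$ on both sides. Hence a sequence of these moves whose endpoints are $\tau$-free never leaves the $\tau$-free locus, and along it the relations used are only (\ref{rel-height-ss})--(\ref{rel-twist-III}) together with (\ref{rel-height-ss1}) — which are exactly the defining relations of $TVB_n$ by the Remark — while the Markov moves used are only real, virtual and twist conjugation, right real/virtual stabilization, and right algebraic under-threading. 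This gives the ``only if'' direction. The ``if'' direction is the elementary one: each move in the list (i)--(iii) evidently leaves the isotopy class of the closure unchanged, as was already noted for the corresponding non-reduced moves earlier in the paper.

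There is no serious obstacle here; the content is the invariance of the $\tau$-free locus just described, which is a line-by-line inspection of (\ref{rel-height-vv2})--(\ref{1rel-twist-III}) and of the two singular Markov moves, checking that each carries a $\tau$ on both sides. One may package this as the observation that $\tau_i\mapsto e$ defines a monoid retraction $STVB_n\to TVB_n$ splitting the inclusion, under which the surplus singular moves become trivial or inapplicable. An alternative, fully equivalent route is to start from the already-established Corollary to Theorem~\ref{algebraic}, which states the Markov theorem for twisted virtual braids but still carries a left algebraic under-threading move, and then apply Theorem~\ref{LRST} — stated for twisted virtual braids — to replace each left algebraic under-threading step by a string of conjugations and one right algebraic under-threading step; here one checks that the conjugations produced by Theorem~\ref{LRST}, being conjugations by the $\tau$-free element $b^{*}=\prod_i(v_i v_{i-1}\cdots v_1)\prod_j\gamma_j$, are all of the permitted virtual or twist type, so that this substitution likewise lands on exactly the list (i)--(iii).
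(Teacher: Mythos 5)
Your proposal is correct, and your second route is exactly the argument the paper intends: the corollary is left without proof, but it is meant to follow by restricting the Corollary of Theorem~\ref{algebraic} to twisted virtual braids and invoking Theorem~\ref{LRST} (whose conjugating element $b^{*}$ is a word in the $v_i$ and $\gamma_j$, so the resulting conjugations are of the permitted virtual and twist types), precisely mirroring the one-line proof of the preceding theorem. Your first route --- checking that every defining relation (\ref{rel-height-vv2})--(\ref{1rel-twist-III}) and every singular Markov move carries a $\tau$ on both sides, so the $\tau$-free locus is invariant and the restriction to $TVB_\infty$ is legitimate --- supplies a justification that the paper leaves entirely implicit, and is a worthwhile addition rather than a deviation.
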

\begin{theorem}\label{1LRST}
    If two singular twisted virtual braids of degree $n$ are related by a LST2-move (LST3-move), then they are related by a sequence of ST4 and RST2-move (RST3-move).
\end{theorem}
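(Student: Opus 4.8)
The plan is to mimic exactly the argument of Theorem~\ref{LRST}, but at the level of braid diagrams (the $L_v$-move picture) rather than at the level of algebraic words, using the same "flip" isomorphism $f_n \colon STVB_n \to STVB_n$, $\sigma_i \mapsto \sigma_{n-i}$, $v_i \mapsto v_{n-i}$, $\gamma_i \mapsto \gamma_{n-i+1}$, $\tau_i \mapsto \tau_{n-i}$, and the same element $b^* = \prod_{i=1}^{n-1}(v_i v_{i-1}\cdots v_1)\prod_{j=1}^{n}\gamma_j$, which reverses the order of the strands and satisfies $(b^*)^2 = e$. As established in the proof of Theorem~\ref{LRST}, conjugation by $b^*$ realizes $f_n$, i.e. $b^* b (b^*)^{-1} = f_n(b)$, so $b$ and $f_n(b)$ are always related by a sequence of (virtual) conjugation moves, and these conjugation moves are instances of the ST4-move. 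The point is that $f_{n+1}$ carries a left-hand-side configuration of the threading move to a right-hand-side configuration, because $f_{n+1}$ sends the index $1$ (the leftmost slot) to the index $n$ (the rightmost slot), and correspondingly $\sigma_1, v_2, \tau_1$ to $\sigma_n, v_{n-1}, \tau_n$.

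First, I would handle the LST2-move. Let $b$ and $b'$ of degree $n+1$ be related by an LST2-move, so that, up to braid isotopy, $b = \iota^0_1(\alpha)$ and $b'$ is the braid obtained by the left under-threaded $L_v$-move applied to $\alpha$ (whose algebraic form, by Theorem~\ref{algebraic}(iii), is $\alpha \sigma_1^{\pm 1} v_2 \sigma_1^{\mp 1}$). Applying $f_{n+1}$, one gets $f_{n+1}(b) = \iota^1_0(f_n(\alpha))$ and $f_{n+1}(b') = f_n(\alpha)\,\sigma_n^{\pm 1} v_{n-1}\sigma_n^{\mp 1}$, which is precisely the output of a right under-threaded $L_v$-move (RST2-move) applied to $f_n(\alpha)$. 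Since $b$ is related to $f_{n+1}(b)$ and $b'$ is related to $f_{n+1}(b')$ by conjugation moves (hence by ST4-moves), we conclude that $b$ and $b'$ are related by a sequence of ST4-moves and one RST2-move.

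Next, I would treat the LST3-move in the same way: writing $b = \iota^0_1(\alpha)\,\tau_1 v_2 \sigma_1^{\pm 1}$ and $b' = \iota^0_1(\alpha)\,\sigma_1^{\pm 1} v_2 \tau_1$ for the two sides of a left $rs$-threaded $L_v$-move, we get $f_{n+1}(b) = \iota^1_0(f_n(\alpha))\,\tau_n v_{n-1}\sigma_n^{\pm 1}$ and $f_{n+1}(b') = \iota^1_0(f_n(\alpha))\,\sigma_n^{\pm 1} v_{n-1}\tau_n$, which are the two sides of a right $rs$-threaded $L_v$-move (RST3-move). Again conjugating back by $b^*$ on both ends via ST4-moves completes the reduction.

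The main obstacle — and the step deserving the most care — is the verification that $f_{n+1}$, when applied to the \emph{local diagrammatic picture} of a LST2-move (resp. LST3-move), genuinely produces the local diagrammatic picture of a RST2-move (resp. RST3-move), and not some mirrored or otherwise altered variant; this requires checking that the flip map preserves the crossing signs and the local structure (bar placement, singular vs. classical vs. virtual crossing type) appearing in Figures~\ref{Lv3move} and~\ref{Lv4move}, and that the "new strand" introduced on the left by the LST-move becomes exactly the "new strand" introduced on the right by the RST-move after the relabeling $i \mapsto n+1-i$. A secondary point to state carefully is that $f_{n+1}$ commutes appropriately with the stabilization maps $\iota^0_1$ and $\iota^1_0$, i.e. $f_{n+1}\circ\iota^0_1 = \iota^1_0\circ f_n$; this is immediate from the definitions but should be noted. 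Once these diagrammatic identifications are in place, the conclusion follows, since conjugation by $b^*$ is realized by ST4-moves exactly as in Theorem~\ref{LRST}.
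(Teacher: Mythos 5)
Your argument is genuinely different from the paper's: the paper proves this theorem purely pictorially (Figures~\ref{lutmove} and~\ref{lrstmove} exhibit the LST2- and LST3-moves directly as compositions of the corresponding right moves with conjugations), whereas you recycle the flip automorphism $f_{n+1}$ and the conjugating element $b^{*}$ from the proof of Theorem~\ref{LRST}. The idea is attractive, but it has a concrete gap: you assert that conjugation by $b^{*}=\prod_{i}(v_{i}\cdots v_{1})\prod_{j}\gamma_{j}$ is realized by ST4-moves, explicitly calling the virtual conjugations ``instances of the ST4-move.'' In this paper's taxonomy that is false: the ST4-move comprises only singular, real, and twist conjugation, and the Remark preceding Definition~\ref{def1} explicitly lists \emph{virtual} conjugation among the moves that must be \emph{derived} from ST1--ST4 (the derivation in~\cite{KS} goes through virtual stabilization, i.e.\ an ST1-move). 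Since $b^{*}$ is a word in the $v_{i}$ and $\gamma_{j}$, your conjugation step consumes many virtual conjugations, so what you actually prove is that an LST2-move reduces to a sequence of ST1-, ST4- and RST2-moves --- strictly weaker than the statement, which allows only ST4 and RST2. This matters because the theorem is precisely an economy claim about which primitive moves suffice.

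There is a second, smaller gap at the start: you identify the two sides of a general diagrammatic LST2-move with the algebraic words $\iota^{0}_{1}(\alpha)$ and $\iota^{0}_{1}(\alpha)\sigma_{1}^{\pm1}v_{2}\sigma_{1}^{\mp1}$ ``up to braid isotopy.'' An LST2-move may be performed at an arbitrary point of the braid diagram; putting it into that bottom-left algebraic normal form is not a braid isotopy but itself requires conjugation (the standard ``an $L$-move at any point equals an $L$-move at the bottom of the strand modulo conjugation'' lemma of~\cite{KS,cpas}), and that normalization is essentially the diagrammatic content the paper's figures supply directly. The obstacle you do flag --- whether the flip preserves crossing signs and local structure --- is actually unproblematic, since $f_{n+1}$ is conjugation by a braid containing only virtual crossings and bars and hence sends $\sigma_{1}^{\pm1}v_{2}\sigma_{1}^{\mp1}$ to $\sigma_{n}^{\pm1}v_{n-1}\sigma_{n}^{\mp1}$ with signs intact; the real issues are the two above. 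To salvage the statement as written you would need either the paper's direct pictorial route or an argument that the particular virtual conjugations you use can be absorbed without invoking ST1.
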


\begin{proof}
In Figure~\ref{lutmove}, pictorially we have shown that LST2-move can be achieved by RST2-move and ST4-move. Similarly, the LST3-move can be obtained by the RST3-move and ST4-move as shown in Figure~\ref{lrstmove}.
     \begin{figure}[h]
  \centering
    \includegraphics[width=12cm,height=7cm]{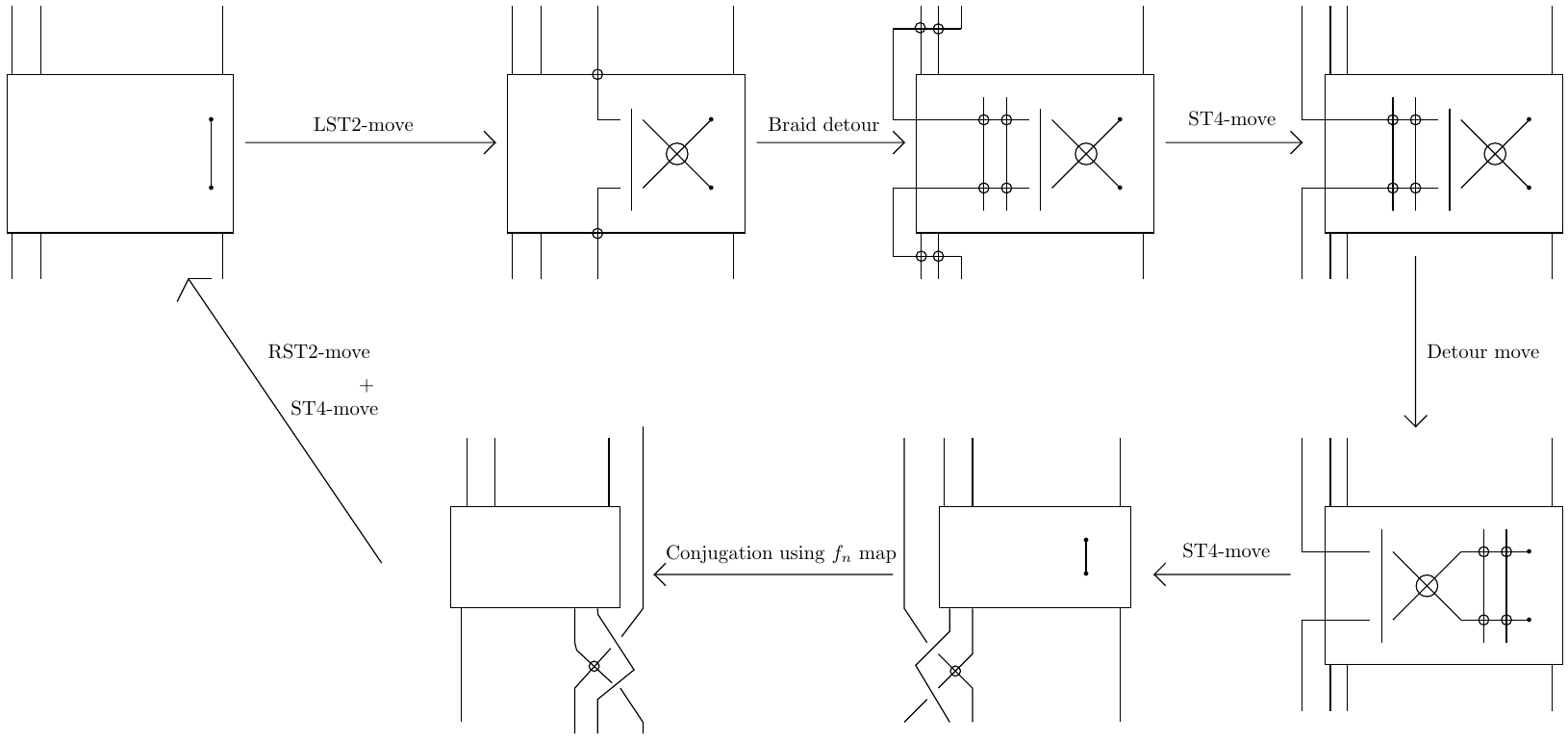}
        \caption{LST2-move using RST2-move and ST4-move}
        \label{lutmove}
        \end{figure} 
         \begin{figure}[H]
  \centering
    \includegraphics[width=11cm,height=7cm]{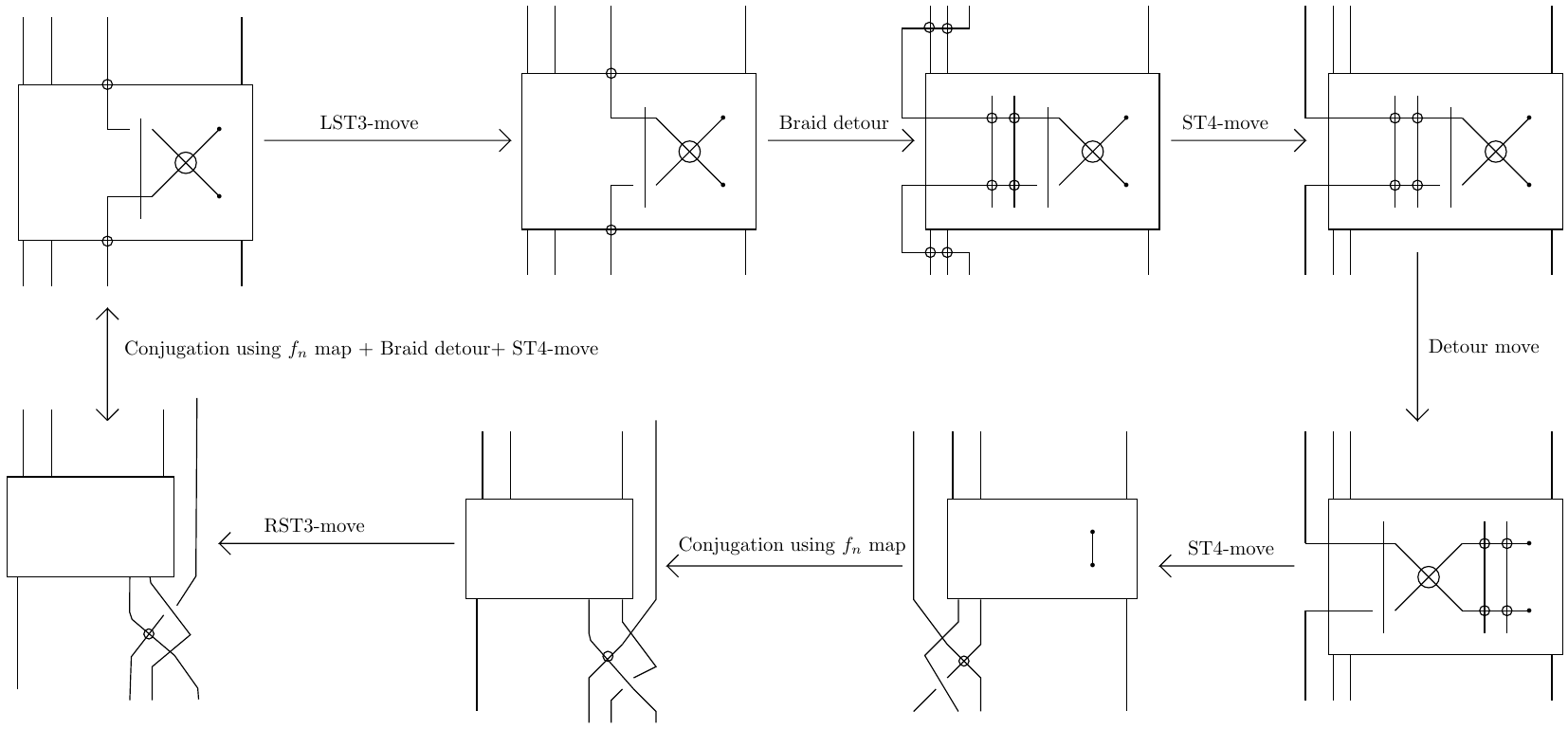}
        \caption{LST3-move using RST3-move and ST4-move}
        \label{lrstmove}
        \end{figure} 
\end{proof}
\begin{theorem}
    Two singular twisted virtual braids have isotopic closures if and only if they differ by sequence of ST1, RST2, RST3 and ST4 moves.
\end{theorem}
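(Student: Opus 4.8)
The plan is to obtain this statement as a direct consequence of the $L_v$-move Markov-type theorem (the theorem asserting that two singular twisted virtual braids have isotopic closures if and only if they are $L_v$-equivalent, i.e.\ related by braid isotopy and a finite sequence of ST1--ST4 moves) together with the reduction established in Theorem~\ref{1LRST}. So the work is purely one of eliminating the ``left'' versions of the threading moves from the generating set of moves, and the two implications are handled separately.

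First I would dispatch the easy direction: the moves ST1, RST2, RST3 and ST4 are each a special case of the full list ST1--ST4 used to define $L_v$-equivalence (recall ST2 consists of LST2 and RST2, while ST3 consists of LST3 and RST3, and ST4 comprises SST4, RST4 and TST4). Hence if two singular twisted virtual braids differ by a sequence of ST1, RST2, RST3 and ST4 moves they are in particular $L_v$-equivalent, and by the $L_v$-move theorem their closures are isotopic singular twisted links.

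For the converse, suppose two singular twisted virtual braids have isotopic closures. By the $L_v$-move theorem they are $L_v$-equivalent, so there is a finite sequence of braid isotopies and ST1, ST2, ST3, ST4 moves joining them. I would then walk along this sequence and replace, one at a time, every occurrence of an LST2-move and every occurrence of an LST3-move. By Theorem~\ref{1LRST} each LST2-move between singular twisted virtual braids of a fixed degree can be realized by a sequence consisting of an ST4-move and an RST2-move, and similarly each LST3-move can be realized by an ST4-move together with an RST3-move. Since these substitutions are local (they affect only the two braids adjacent to the move being replaced, up to the allowed braid isotopy), performing all of them produces a new finite sequence of braid isotopies and moves drawn only from ST1, RST2, RST3 and ST4 that still joins the original two braids. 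This proves that isotopic closures implies equivalence under ST1, RST2, RST3, ST4 moves, completing the argument.

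I do not expect a genuine obstacle here, since everything rests on results already proved; the only point requiring a little care is making sure Theorem~\ref{1LRST} is invoked at the correct degree at each step of the sequence (the degree can change across a stabilization/threading move, but each individual LST2- or LST3-move occurs at a well-defined degree, which is exactly the hypothesis of Theorem~\ref{1LRST}), and noting that ST4 is understood throughout to include its singular, real, and twist sub-moves so that the ST4-moves introduced by the reduction are legitimate.

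\begin{proof}
If two singular twisted virtual braids differ by a sequence of ST1, RST2, RST3 and ST4 moves, then since each of these moves is among ST1--ST4 they are $L_v$-equivalent, so their closures are isotopic.

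Conversely, if their closures are isotopic then they are $L_v$-equivalent, hence joined by a finite sequence of braid isotopies and ST1--ST4 moves. By Theorem~\ref{1LRST}, every LST2-move in this sequence may be replaced by an ST4-move followed by an RST2-move, and every LST3-move by an ST4-move followed by an RST3-move. After carrying out all such replacements we obtain a finite sequence of braid isotopies together with moves only of types ST1, RST2, RST3 and ST4 connecting the two braids.
\end{proof}
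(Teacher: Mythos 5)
Your argument is correct and matches the paper's proof, which simply cites Definition~\ref{def1} together with Theorem~\ref{1LRST}; you have just spelled out the replacement of each LST2-move (resp.\ LST3-move) by an ST4-move and an RST2-move (resp.\ RST3-move) along the $L_v$-equivalence sequence. No issues.
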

\begin{proof}
    Use Definition~\ref{def1} and Theorem~\ref{1LRST}.
\end{proof}
\begin{corollary}
    Two twisted virtual braids have isotopic closures if and only if they differ by sequence of ST1, RST2, RST4, and TST4 moves.
\end{corollary}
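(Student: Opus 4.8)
The plan is to mirror, in the non-singular setting, the reduction that established the immediately preceding theorem for singular twisted virtual braids, but starting this time from the twisted $L_v$-equivalence characterization obtained earlier in the section.

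First I would invoke the corollary stating that two twisted virtual braids have isotopic closures if and only if they are twisted $L_v$-equivalent. Unwinding the definition of twisted $L_v$-equivalence, this says precisely that the two braids are related by twisted virtual braid isotopy together with a finite sequence of ST1-, ST2-, RST4-, and TST4-moves. Since the ST2-move is by definition either a LST2-move or a RST2-move, the only move occurring here that is not already on the target list is the LST2-move.

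Next I would eliminate the LST2-move by applying Theorem~\ref{1LRST}: any two braids related by an LST2-move are related by a sequence of ST4- and RST2-moves. A twisted virtual braid carries no singular crossing, so in the realization of the LST2-move pictured in Figure~\ref{lutmove} every conjugator is a word in $\sigma_i^{\pm1}$, $v_i$, and $\gamma_i$; hence the ST4-moves that occur there are RST4- and TST4-moves, both already present. Replacing each LST2-move in a given twisted $L_v$-equivalence by such a sequence therefore produces an equivalence that uses only ST1-, RST2-, RST4-, and TST4-moves (together with braid isotopy), which yields the ``only if'' direction. The ``if'' direction is immediate, since each of ST1, RST2, RST4, TST4 visibly preserves the isotopy class of the closure.

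The only delicate point, and essentially the only thing that needs checking, is the claim that the ST4-moves spawned by Theorem~\ref{1LRST} in the twisted (non-singular) setting are never singular conjugations; as noted, this is forced by the absence of singular crossings, so no genuine obstacle remains and the corollary follows at once.
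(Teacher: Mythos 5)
Your proposal is correct and follows essentially the same route the paper intends: the corollary is obtained by combining the earlier corollary characterizing isotopic closures via twisted $L_v$-equivalence (ST1, ST2, RST4, TST4) with Theorem~\ref{1LRST} to trade each LST2-move for RST2- and ST4-moves. Your extra observation that the ST4-moves arising in the non-singular setting can only be real or twist conjugations is a worthwhile point that the paper leaves implicit.
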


\section{Reduced presentation for singular twisted virtual braid monoid}
In this section, we show that the presentation of the singular twisted virtual braid monoid $STVB_n$ given in Theorem~\ref{thm:StandardPresentation2} can be reduced to a presentation with less generators and less relations by rewriting $\sigma_i$ $(i=2,\ldots, n-1)$, $\tau_i$ $(i=2,\ldots, n-1)$, and $\gamma_i$ $(i=2,\ldots, n)$ in terms of $\sigma_1$, $\tau_1$, $\gamma_1$ and $v_1, \dots, v_{n-1}$  as follows:
\begin{align}
    \sigma_i & =(v_{i-1}\ldots v_1)(v_i \ldots v_2)\sigma_1(v_2  \ldots v_i)(v_1  \ldots v_{i-1}) & \text{ for } & i=2,\ldots, n-1,  \label{1st reduction} \\ 
     \tau_i & =(v_{i-1}\ldots v_1)(v_i \ldots v_2)\tau_1(v_2  \ldots v_i)(v_1  \ldots v_{i-1}) & \text{ for } & i=2,\ldots, n-1,  \label{2nd reduction} \\
    \gamma_i & =(v_{i-1}\ldots v_1)\gamma_1(v_1  \ldots v_{i-1}) & \text{ for } & i=2,\ldots, n.  \label{3rd reduction}
\end{align}
These can be seen geometrically from their diagrams shown in Figure~\ref{o}.
\begin{figure}[h]
\centering
   \includegraphics[width=11cm,height=5cm]{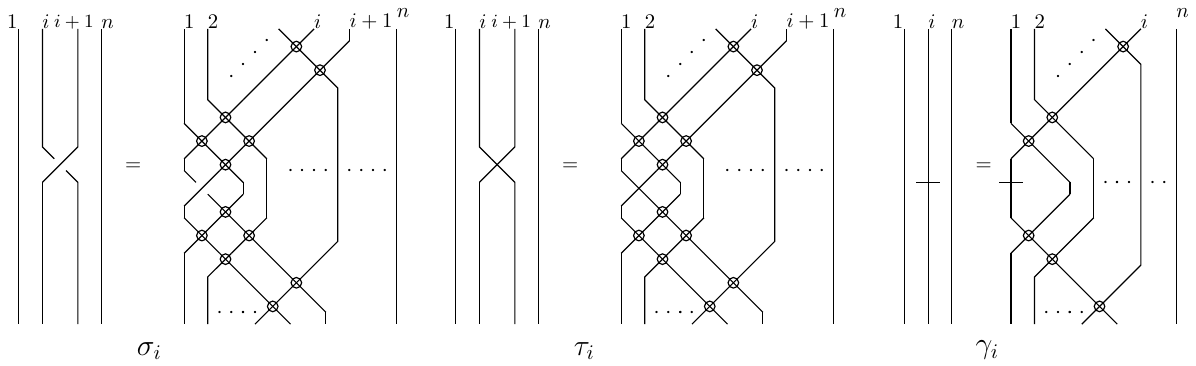}
      \caption{$\sigma_i$, $\tau_i$, and $\gamma_i$}
    \label{o}
      \end{figure}
        
Many relations are  already reduced in~\cite{cpas},~\cite{NPK},and~\cite{KS}.

\begin{theorem}\label{thm:ReducedPresentation}
The monoid $STVB_n$ has a presentation whose generators are $\sigma^{\pm 1}_1,\tau_1, \gamma_1, v_1,\dots, v_{n-1}$   
and the defining relations are as follows:  

\begin{align}
 v_i^2  & = e   & \text{ for }  & i=1,\ldots, n-1;     \label{relB-inverse-v}\\
 v_iv_j  & = v_jv_i  & \text{ for } & |i-j| > 1 ;  \label{relB-height-vv}\\
 v_iv_{i+1}v_i & = v_{i+1}v_iv_{i+1} & \text{ for } & i=1,\ldots, n-2; \label{relB-vvv}\\
 \sigma_1(v_2v_3v_1v_2\sigma_1v_2v_1v_3v_2) & = (v_2v_3v_1v_2\sigma_1v_2v_1v_3v_2)\sigma_1, &  & \label{relB-height-ss} \\
  (v_1\sigma_1v_1)(v_2\sigma_{1}v_2)(v_1\sigma_1v_1) & = (v_2\sigma_1v_2)(v_1\sigma_{1}v_1)(v_2\sigma_1v_2), & & \label{relB-sss} \\
 \sigma_1v_j  & = v_j\sigma_1 & \text{ for } & j = 3, \ldots, n-1; \label{relB-height-sv}\\
 \gamma_1^2  & = e,  & &  \label{relB-inverse-b} \\
 \gamma_1v_j & =  v_j\gamma_1 & \text{ for } & j = 2, \ldots, n-1; & \label{relB-height-bv}\\
  \gamma_1v_1\gamma_1v_1 & =v_1\gamma_1v_1\gamma_1, & &  \label{relB-height-bb}\\
  \gamma_1v_1v_2\sigma_1v_2v_1 & = v_1v_2\sigma_1v_2v_1\gamma_1, & & \label{relB-height-sb}\\
  \gamma_{1}v_1\gamma_1\sigma_{1} \gamma_1v_1\gamma_{1} & = \sigma_1. & & \label{relB-bv}\\
  \sigma_1\sigma^{-1}_1 & =e &  \label{sigma}\\
  \sigma_1\tau_1 &=\tau_1\sigma_1 & \label{tausigma}\\
  \tau_1 v_i & =v_i \tau_1 & \text{ for }  & i=3, \ldots n-1; \label{vtau}\\
  \tau(v_1v_2\sigma_1v_2v_1)\sigma_1 & = (v_1v_2\sigma_1v_2v_1)\sigma_1(v_1v_2\tau_1v_2v_1) \label{1}\\
  \tau_1(v_2v_3v_1v_2\sigma_1v_2v_1v_3v_2) &=(v_2v_3v_1v_2\sigma_1v_2v_1v_3v_2)\tau_1 \label{2}\\
  \tau_1(v_2v_3v_1v_2\tau_1v_2v_1v_3v_2) &=(v_2v_3v_1v_2\tau_1v_2v_1v_3v_2)\tau_1 \label{2}\\
   \gamma_1v_1v_2\tau_1v_2v_1 & = v_1v_2\tau_1v_2v_1\gamma_1, & & \label{relB-height-sb2}\\
  \gamma_{1}v_1\gamma_1\tau_{1} \gamma_1v_1\gamma_{1} & = \tau_1. & & \label{relB-bv2}
  \end{align}
  
\end{theorem}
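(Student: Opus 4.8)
The plan is to prove Theorem~\ref{thm:ReducedPresentation} by a standard Tietze-transformation argument, converting the presentation of Theorem~\ref{thm:StandardPresentation2} into the claimed reduced one. Let $M$ be the monoid with the standard presentation (generators $\sigma_i^{\pm 1}, v_i, \gamma_i, \tau_i$ and relations \eqref{rel-height-ss}--\eqref{1rel-twist-III}), and let $M'$ be the monoid with the reduced presentation (generators $\sigma_1^{\pm 1}, \tau_1, \gamma_1, v_1,\dots,v_{n-1}$ and relations \eqref{relB-inverse-v}--\eqref{relB-bv2}). The key point is that the substitutions \eqref{1st reduction}--\eqref{3rd reduction} are exactly the consequences, in $M$, of the exchange-type relations \eqref{rel-bv}, \eqref{rel-height-bv} (and their $\tau$-analogues \eqref{rel-vvv1}, \eqref{rel-vvv}, \eqref{rel-height-vv1}) together with the $v_i^2=e$ relations: conjugating $\sigma_1$ (resp.\ $\tau_1$, $\gamma_1$) by the appropriate product of $v$'s moves it to strand position $i$. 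So in $M$ the elements $\sigma_i,\tau_i$ ($i\ge 2$), $\gamma_i$ ($i\ge 2$) are redundant generators, which may be eliminated via Tietze moves; what remains is to check that, after this elimination, the image of each relation \eqref{rel-height-ss}--\eqref{1rel-twist-III} is a consequence of the reduced relations \eqref{relB-inverse-v}--\eqref{relB-bv2}, and conversely.

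Concretely I would proceed as follows. First, I would record the ``conjugation lemmas'' — e.g.\ in $M$ one has $v_i\sigma_1 v_i = \sigma_1$ for $i\ge 3$ from \eqref{rel-height-sv} and $v_i^2=e$, so that \eqref{1st reduction} is well-defined and matches the picture in Figure~\ref{o} — and verify that substituting \eqref{1st reduction}--\eqref{3rd reduction} into each standard relation involving only one ``far'' index gives a reduced relation or a trivial identity. Second, and this is the heart of the argument, I would treat the ``local'' relations: the braid relations \eqref{rel-sss}, the mixed relations \eqref{rel-vsv}, \eqref{rel-bv}, \eqref{rel-twist-III}, \eqref{rel-sss1}, \eqref{rel-vvv1}, \eqref{1rel-twist-III}, and their commutation counterparts. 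For these one plugs the $v$-conjugate expressions for $\sigma_i,\tau_i,\gamma_i$ into the relation, uses the braid-type relations \eqref{relB-height-vv}--\eqref{relB-vvv} among the $v$'s to cancel the conjugating words, and reads off that the relation is equivalent to one of \eqref{relB-height-ss}, \eqref{relB-sss}, \eqref{relB-height-sb}, \eqref{relB-bv}, \eqref{1}--\eqref{2}, \eqref{relB-height-sb2}, \eqref{relB-bv2} at index $1$ (or $2$). Since the paper notes that the classical part of this reduction is already done in~\cite{cpas,NPK,KS}, I would only spell out the genuinely new relations — those mixing $\tau_1$ with $\sigma_1$ and $\gamma_1$, namely \eqref{tausigma}, \eqref{vtau}, \eqref{1}, \eqref{2}, \eqref{relB-height-sb2}, \eqref{relB-bv2} — and invoke the cited references for the rest. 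Third, for the converse direction I would check that every reduced relation \eqref{relB-inverse-v}--\eqref{relB-bv2} holds in $M$ under the substitutions, which is immediate since each is a special instance (at index $1$ or $2$) of a standard relation after the $v$-conjugations are expanded.

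The main obstacle I anticipate is bookkeeping rather than conceptual: one has to be careful that the $v$-conjugation words in \eqref{1st reduction}--\eqref{3rd reduction} interact correctly, i.e.\ that $\sigma_i\sigma_{i+1}\sigma_i=\sigma_{i+1}\sigma_i\sigma_{i+1}$ for all $i$ really does follow from the single ``worst case'' relation \eqref{relB-sss} (phrased using $v_1\sigma_1v_1$ for ``$\sigma_2$ shifted'') plus the $v$-relations, and similarly that the commutation relations for $|i-j|>1$ follow from finitely many instances. This is where the length of the conjugating words grows and where an induction on the index (pushing generators one strand at a time using $v_iv_{i+1}v_i=v_{i+1}v_iv_{i+1}$ and $v_i\sigma_iv_i$-type identities) is needed; I would isolate this as a small lemma: for $2\le i\le n-1$, the standard relations at index $i$ are consequences of the reduced relations together with the relations at index $i-1$. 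Granting that lemma, the Tietze transformation goes through and the theorem follows.
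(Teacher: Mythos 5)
Your proposal is correct and follows essentially the same route as the paper: eliminate $\sigma_i,\tau_i$ ($i\ge 2$) and $\gamma_i$ ($i\ge 2$) via the $v$-conjugation formulas \eqref{1st reduction}--\eqref{3rd reduction}, cite \cite{cpas,NPK,KS} for the relations already reduced there, and verify by an induction on the index that the genuinely new standard relations involving $\tau$ and $\gamma$ follow from the reduced ones --- which is exactly the content of the paper's Lemmas~\ref{p1} and~\ref{p2}.
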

The following lemmas are sufficient for the proof of the Theorem~\ref{thm:ReducedPresentation}.

\begin{lemma}~\label{p1}
    The braid relation $\tau_i\gamma_j  = \gamma_j\tau_i$ follow from the defining relations (\ref{2nd reduction}), and (\ref{3rd reduction}), the other relations, and the reduced relations $ \gamma_1v_1v_2\tau_1v_2v_1  = v_1v_2\tau_1v_2v_1\gamma_1$ and $\gamma_1v_j=v_j\gamma_1, \text{ for } j\neq 1$.
\end{lemma}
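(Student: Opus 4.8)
The plan is to verify the commutation relation $\tau_i \gamma_j = \gamma_j \tau_i$ for all indices $i \in \{1,\dots,n-1\}$ and $j \in \{1,\dots,n\}$ by reducing everything to manipulations involving only $\tau_1$, $\gamma_1$, and the $v_k$. Using the defining rewritings (\ref{2nd reduction}) and (\ref{3rd reduction}), both $\tau_i$ and $\gamma_j$ become words in $\tau_1$, $\gamma_1$, and virtual generators, so the identity $\tau_i\gamma_j=\gamma_j\tau_i$ is equivalent to a relation purely among these. The strategy is: first treat the "far apart" cases where the conjugating virtual words do not interact, handled by the reduced relation $\gamma_1 v_j = v_j \gamma_1$ for $j\neq 1$ together with $\tau_1 v_j = v_j \tau_1$ and the virtual relations (\ref{relB-inverse-v})--(\ref{relB-vvv}); and second treat the "adjacent" or overlapping cases $j\in\{i,i+1\}$, which is where the genuine content sits and where the reduced relation $\gamma_1 v_1 v_2 \tau_1 v_2 v_1 = v_1 v_2 \tau_1 v_2 v_1 \gamma_1$ is needed.

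Concretely, I would first record the symmetry already available: by the analogous lemma for $\sigma$ (the relations (\ref{relB-height-sb}) and (\ref{relB-bv}) that encode $\sigma_i\gamma_j=\gamma_j\sigma_i$, reduced in the cited references), the combinatorial bookkeeping of conjugating $\gamma_j$ past $\tau_i$ is formally identical to conjugating $\gamma_j$ past $\sigma_i$ — $\tau_1$ and $\sigma_1$ occupy the same position relative to the strands and are both supported on strands $i,i+1$. So the proof amounts to checking that every step used in the $\sigma$-case has a valid $\tau$-analogue among the reduced relations of Theorem~\ref{thm:ReducedPresentation}: the relation $\gamma_1 v_1 v_2 \tau_1 v_2 v_1 = v_1 v_2 \tau_1 v_2 v_1 \gamma_1$ (\ref{relB-height-sb2}), the relation $\gamma_1 v_1 \gamma_1 \tau_1 \gamma_1 v_1 \gamma_1 = \tau_1$ (\ref{relB-bv2}), and $\tau_1 v_j = v_j \tau_1$ for $j\ge 3$ (\ref{vtau}). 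Then I would carry out the reduction in two regimes. For $|i-j|$ large (so that $j\neq i, i+1$), substitute (\ref{2nd reduction}) and (\ref{3rd reduction}), slide the virtual generators of $\gamma_j$ through those of $\tau_i$ using the virtual relations, and use $\gamma_1 v_k = v_k\gamma_1$ and $\tau_1 v_k = v_k \tau_1$ to push $\gamma_1$ and $\tau_1$ past each other's virtual tails until they are supported on disjoint strand-intervals, at which point they commute because the words share no letters; this recovers $\tau_i \gamma_j = \gamma_j \tau_i$. For the overlapping cases $j = i$ and $j = i+1$, perform the same substitution; after cancelling and reassociating the virtual words (using (\ref{relB-inverse-v})--(\ref{relB-vvv})) the relation collapses to a conjugated form of either (\ref{relB-height-sb2}) or (\ref{relB-bv2}), which is a defining relation.

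The main obstacle I anticipate is the bookkeeping in the overlapping cases: after substituting the long virtual words $(v_{i-1}\ldots v_1)(v_i\ldots v_2)$ into both $\tau_i$ and $\gamma_j$, one must carefully cancel the matching tails and apply the far-commutation relations ($v_k v_\ell = v_\ell v_k$ for $|k-\ell|>1$) and the Yang--Baxter relations ($v_k v_{k+1} v_k = v_{k+1} v_k v_{k+1}$) in exactly the right order so that what remains is literally a conjugate of (\ref{relB-height-sb2}) or (\ref{relB-bv2}); getting the virtual "frame" to match up on the two sides is the delicate part, though it is the same computation that was already done for $\sigma$ in the references, so no new ideas are required beyond substituting $\tau$ for $\sigma$ throughout. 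I would streamline this by invoking the identity $\sigma_i\gamma_j=\gamma_j\sigma_i$ as a template and remarking that the derivation uses only virtual relations plus (\ref{relB-height-bv}), (\ref{relB-height-sb}), (\ref{relB-bv}), so replacing those last two by their singular counterparts (\ref{relB-height-sb2}), (\ref{relB-bv2}) yields $\tau_i\gamma_j=\gamma_j\tau_i$ verbatim.
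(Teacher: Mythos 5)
There is a genuine gap, and it stems from a misreading of what the lemma actually asserts. The relation being derived is (\ref{1rel-height-sb}), which holds only for $j\neq i,\,i+1$; in the ``overlapping'' cases $j\in\{i,i+1\}$ the generators $\tau_i$ and $\gamma_j$ do \emph{not} commute --- their interaction is governed by the twisted relation $v_i\tau_iv_i=\gamma_{i+1}\gamma_i\tau_i\gamma_i\gamma_{i+1}$, which is the subject of the separate Lemma~\ref{p2} and of the reduced relation (\ref{relB-bv2}). So the second half of your plan, where you say ``the genuine content sits'' in the cases $j=i,i+1$ and propose to collapse them to (\ref{relB-height-sb2}) or (\ref{relB-bv2}), is aimed at proving a statement that is neither claimed nor true. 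Relation (\ref{relB-bv2}) plays no role in this lemma at all.

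The second, related problem is that your argument for the case that \emph{is} the lemma ($j\neq i,i+1$) does not go through as described. You claim that after substituting (\ref{2nd reduction}) and (\ref{3rd reduction}) one can push $\gamma_1$ and $\tau_1$ past each other's virtual tails ``until they are supported on disjoint strand-intervals, at which point they commute because the words share no letters.'' But the tails always reach down to index $1$: $\gamma_j=(v_{j-1}\cdots v_1)\gamma_1(v_1\cdots v_{j-1})$ contains $v_1,v_2$, which do not commute with $\tau_1$ (relation (\ref{vtau}) only gives $\tau_1v_k=v_k\tau_1$ for $k\geq 3$), and the tail of $\tau_i$ contains $v_1$, which does not commute with $\gamma_1$. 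The letters never become disjoint, and the whole content of the lemma is precisely this residual interaction. The paper resolves it by observing that (\ref{relB-height-sb2}), i.e.\ $\gamma_1(v_1v_2\tau_1v_2v_1)=(v_1v_2\tau_1v_2v_1)\gamma_1$, is exactly the statement $\gamma_1\tau_2=\tau_2\gamma_1$; an induction on $i$ (using $\tau_{i+1}=v_iv_{i+1}\tau_iv_{i+1}v_i$ and $\gamma_1v_k=v_k\gamma_1$ for $k\neq 1$) then gives $\gamma_1\tau_i=\tau_i\gamma_1$ for all $i\geq 2$, and the general case $\tau_i\gamma_j=\gamma_j\tau_i$ follows by conjugating with the virtual word of $\gamma_j$, splitting into $j\leq i-1$ and $j\geq i+2$ (the latter requiring a shift $\tau_i\rightsquigarrow\tau_{i+1}\rightsquigarrow\tau_i$ as $\tau$ passes through the tail). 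Your proposal is missing this inductive mechanism, and it assigns the one relation that drives it to the wrong case. The high-level heuristic that the computation parallels the $\sigma$-case is fine, but as written the proof would not assemble into a correct derivation.
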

\begin{proof}
We can see that the relation $(\ref{2nd reduction})$ is equivalent to the relation 

$\tau_{i+1} = v_i v_{i+1} \tau_i v_{i+1} v_i$, $\forall 1\leq i\leq n-1$.  

First we show $(\ref{1rel-height-sb})$ when  $j=1$, i.e., 
$\tau_i\gamma_1=\gamma_1\tau_i$ for $i \neq 1$.  
We apply induction on $i$, with initial condition $i=2$.
Using the relation $(\ref{2nd reduction})$ and $(\ref{relB-height-sb2})$, the following relation follows: 
\begin{equation}
  \tau_2\gamma_1=\gamma_1\tau_2  \label{i=1}
\end{equation}
Assuming $\tau_i\gamma_1=\gamma_1\tau_i$, we obtain $\tau_{i+1}\gamma_1=\gamma_1\tau_{i+1}$ as follows: 
\begin{align*}
    \tau_{i+1}\gamma_1 & = v_{i}v_{i+1}\tau_{i}v_{i+1}\underline{v_{i}\gamma_1}~~~~~~~~~~~~~~~~~~~~~ \text{ (by (\ref{relB-height-bv}))}\\
                    & = v_{i}v_{i+1}\tau_{i}\underline{v_{i+1}\gamma_1}v_{i}~~~~~~~~~~~~~~~~~~~~~ \text{ (by (\ref{relB-height-bv}))}\\
                    & = v_{i}v_{i+1}\underline{\tau_{i}\gamma_1}v_{i+1}v_{i}~~~~~~~~~~~~~~~~~~~~~\text{ (by (\ref{i=1}))}\\
                    & = v_{i}\underline{v_{i+1}\gamma_1}\tau_{i}v_{i+1}v_{i}~~~~~~~~~~~~~~~~~~~~~\text{ (by (\ref{relB-height-bv}))}\\
                    & = \underline{v_{i}\gamma_1}v_{i+1}\tau_{i}v_{i+1}v_{i}~~~~~~~~~~~~~~~~~~~~~ \text{ (by (\ref{relB-height-bv}))}\\
                    & = \gamma_1 \underline{v_{i}v_{i+1}\tau_{i}v_{i+1}v_{i}}~~~~~~~~~~~~~~~~~~~~~ \text{ (by (\ref{2nd reduction}))}\\
                    & = \gamma_1\tau_{i+1}.
\end{align*}
Hence,\begin{equation}
    \tau_i\gamma_1=\gamma_1\tau_i \quad \text{ for } i \neq 1. \label{4th reduction}
\end{equation}

Now, we show relation $(\ref{1rel-height-sb})$: $\tau_i\gamma_j = \gamma_j\tau_i$ for $j\neq i, i+1$.

Case(i) Suppose $j\leq i-1$. Then 
\begin{align*}
    \tau_i\gamma_j & = \underline{\tau_i }(v_{j-1}\ldots v_1)\gamma_1(v_1 \ldots v_{j-1})~~~~~~~~~~~ \text{ (by (\ref{rel-height-vv1}))}\\
                & = (v_{j-1}\ldots v_1)\underline{\tau_i \gamma_1}(v_1 \ldots v_{j-1}) ~~~~~~~~~~~ \text{ (by (\ref{4th reduction}))}\\
                & = (v_{j-1}\ldots v_1)\gamma_1\underline{\tau_i }(v_1 \ldots v_{j-1})~~~~~~~~~~~ \text{ (by (\ref{rel-height-vv1}))}\\
                & = \underline{(v_{j-1}\ldots v_1)\gamma_1(v_1 \ldots v_{j-1})}\tau_i~~~~~~~~~~~ \text{ (by (\ref{3rd reduction}))}\\
                & = \gamma_j\tau_i.
\end{align*}

Case(ii) Suppose $j\geq i+2$. Then 
\begin{align*}
    \tau_i\gamma_j & = \underline{\tau_i }(v_{j-1}\ldots v_1)\gamma_1(v_1 \ldots v_{j-1})~~~~~~~~~~~~~~~~~~~~~~~~~~~~~~~~~~~~~~~~~~~~~~~ \text{ (by (\ref{rel-height-vv1}))}\\
                & = (v_{j-1}\ldots v_{i+2}) \underline{\tau_i} (v_{i+1}\ldots v_1)\gamma_1(v_1 \ldots v_{j-1})~~~~~~~~~~~~~~~~~~~~~~~~~~~~~ \text{ (by (\ref{2nd reduction}))}\\ 
                & = (v_{j-1}\ldots v_{i+2}) v_{i+1} v_i \tau_{i+1} \underline{v_i v_{i+1}} (\underline{v_{i+1} v_i} v_{i-1} \ldots v_1)\gamma_1(v_1 \ldots v_{j-1})~~ \text{ (by (\ref{relB-inverse-v}))}\\                 
                & = (v_{j-1}\ldots v_{i})\underline{\tau_{i+1} }(v_{i-1}\ldots v_1)\gamma_1(v_1 \ldots v_{j-1})~~~~~~~~~~~~~~~~~~~~~~~~~~~~~ \text{ (by (\ref{rel-height-vv1}))}\\
                & = (v_{j-1}\ldots v_{i})(v_{i-1}\ldots v_1)\underline{\tau_{i+1}\gamma_1 }(v_1 \ldots v_{j-1})~~~~~~~~~~~~~~~~~~~~~~~~~~~~~\text{ (by (\ref{4th reduction}))}\\
                & = (v_{j-1}\ldots v_{1}) \gamma_1 \underline{\tau_{i+1}} (v_1 \ldots v_{j-1})~~~~~~~~~~~~~~~~~~~~~~~~~~~~~~~~~~~~~~~~~~~~ \text{ (by (\ref{rel-height-vv1}))}\\
                & = (v_{j-1}\ldots v_{1}) \gamma_1 (v_1 \ldots v_{i-1}) \underline{\tau_{i+1}} (v_i \ldots v_{j-1})~~~~~~~~~~~~~~~~~~~~~~~~~~~~~\text{ (by (\ref{2nd reduction}))}\\
                & = (v_{j-1}\ldots v_{1}) \gamma_1 (v_1 \ldots v_{i-1}) v_i v_{i+1} \tau_i \underline{v_{i+1} v_i} ( \underline{v_i v_{i+1}} v_{i+2} \ldots v_{j-1})~~~~~\text{ (by (\ref{relB-inverse-v}))}\\
                & = (v_{j-1}\ldots v_{1}) \gamma_1 (v_1 \ldots v_{i+1}) \underline{\tau_{i}} (v_{i+2} \ldots v_{j-1})~~~~~~~~~~~~~~~~~~~~~~~~~~~~~\text{ (by (\ref{2nd reduction}))}\\
                & = \underline{(v_{j-1}\ldots v_{1}) \gamma_1 (v_1 \ldots v_{i+1})(v_{i+2} \ldots v_{j-1})}\tau_{i} ~~~~~~~~~~~~~~~~~~~~~~~~~~~~~\text{ (by (\ref{3rd reduction}))}\\
                & = \gamma_j\tau_i.
\end{align*}
\end{proof}

\begin{lemma}\label{p2}
    The braid relation $v_{i} \sigma_i v_{i}  = \gamma_{i+1} \gamma_i \tau_{i} \gamma_i \gamma_{i+1}$ follow from the defining relations (\ref{2nd reduction}) and (\ref{3rd reduction}), the virtual relations, and the reduced relations $ \gamma_1v_1\gamma_1\tau_1\gamma_1v_1\gamma_1 = \tau_1$ and $\gamma_1v_j=v_j\gamma_1, \text{ for } j\neq 1$.
\end{lemma}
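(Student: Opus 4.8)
The plan is to derive relation~\eqref{1rel-twist-III}, namely $v_i\tau_i v_i=\gamma_{i+1}\gamma_i\tau_i\gamma_i\gamma_{i+1}$ for $i=1,\dots,n-1$, by induction on $i$, using only the ingredients named in the statement. The starting point is three ``shift'' identities that are immediate from the defining reductions together with $v_j^2=e$: relation~\eqref{2nd reduction} is equivalent to $\tau_{i+1}=v_i v_{i+1}\tau_i v_{i+1}v_i$ (exactly as observed in the proof of Lemma~\ref{p1}), and relation~\eqref{3rd reduction} gives $\gamma_{i+1}=v_i\gamma_i v_i$ and hence $\gamma_{i+2}=v_{i+1}\gamma_{i+1}v_{i+1}$. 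I also need the commutation $\gamma_i v_{i+1}=v_{i+1}\gamma_i$; this too follows from~\eqref{3rd reduction}, since $v_{i+1}$ commutes with each of $v_1,\dots,v_{i-1}$ by commutativity of distant virtual generators and with $\gamma_1$ by the hypothesis $\gamma_1 v_j=v_j\gamma_1$ $(j\neq 1)$.

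For the base case $i=1$, substitute $\gamma_2=v_1\gamma_1 v_1$ into the right-hand side:
\[
\gamma_2\gamma_1\tau_1\gamma_1\gamma_2 \;=\; v_1\bigl(\gamma_1 v_1\gamma_1\tau_1\gamma_1 v_1\gamma_1\bigr)v_1 \;=\; v_1\tau_1 v_1,
\]
where the second equality is the reduced relation $\gamma_1 v_1\gamma_1\tau_1\gamma_1 v_1\gamma_1=\tau_1$ (relation~\eqref{relB-bv2}).

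For the inductive step, assume $v_i\tau_i v_i=\gamma_{i+1}\gamma_i\tau_i\gamma_i\gamma_{i+1}$ and derive the same statement with $i$ replaced by $i+1$. On the left, inserting $\tau_{i+1}=v_i v_{i+1}\tau_i v_{i+1}v_i$ and using the virtual braid relation $v_{i+1}v_i v_{i+1}=v_i v_{i+1}v_i$ together with cancellation of $v_j^2$-pairs gives $v_{i+1}\tau_{i+1}v_{i+1}=v_i v_{i+1}\,(v_i\tau_i v_i)\,v_{i+1}v_i$. On the right, inserting the shift identities for $\gamma_{i+2},\gamma_{i+1},\tau_{i+1}$ produces (after the evident cancellations) a word of length seventeen in $v_i,v_{i+1},\gamma_i,\tau_i$; sliding the two $\gamma_i$'s adjacent to $\tau_i$ via $\gamma_i v_{i+1}=v_{i+1}\gamma_i$ and then repeatedly applying $v_{i+1}v_i v_{i+1}=v_i v_{i+1}v_i$ and $v_j^2=e$ collapses it to $v_i v_{i+1}\,(v_i\gamma_i v_i\,\gamma_i\tau_i\gamma_i\,v_i\gamma_i v_i)\,v_{i+1}v_i=v_i v_{i+1}\,(\gamma_{i+1}\gamma_i\tau_i\gamma_i\gamma_{i+1})\,v_{i+1}v_i$. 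The induction hypothesis now identifies the two bracketed subwords, so the two sides agree and the induction is complete.

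The computation is entirely mechanical: only $v_j^2=e$, $v_j v_{j+1}v_j=v_{j+1}v_j v_{j+1}$ and the single commutation $\gamma_i v_{i+1}=v_{i+1}\gamma_i$ intervene, and there is no case analysis. The one genuine task, and the main obstacle, is to carry out the collapse of the length-seventeen word carefully; I would present it as a short chain of equalities with the rule used annotated on each line, in the same style as the proof of Lemma~\ref{p1}.
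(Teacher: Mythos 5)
Your argument is correct, but it takes a genuinely different route from the paper's. The paper proves the identity in a single direct computation: it substitutes the full reduction formulas $\gamma_i=(v_{i-1}\cdots v_1)\gamma_1(v_1\cdots v_{i-1})$ and $\tau_i=(v_{i-1}\cdots v_1)(v_i\cdots v_2)\tau_1(v_2\cdots v_i)(v_1\cdots v_{i-1})$ into $\gamma_{i+1}\gamma_i\tau_i\gamma_i\gamma_{i+1}$, invokes the auxiliary identity $(v_1\cdots v_{i-1})v_i(v_{i-1}\cdots v_1)=(v_i\cdots v_2)v_1(v_2\cdots v_i)$ together with $\gamma_1 v_j=v_j\gamma_1$ to push the outer $\gamma_1$'s inward, exposes $\gamma_1v_1\gamma_1\tau_1\gamma_1v_1\gamma_1=\tau_1$ in the middle, and reassembles the surviving $v$'s into $v_i\tau_iv_i$. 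You instead induct on $i$ using the one-step shift identities $\gamma_{i+1}=v_i\gamma_iv_i$ and $\tau_{i+1}=v_iv_{i+1}\tau_iv_{i+1}v_i$: the base case is precisely the reduced relation conjugated by $v_1$, and the inductive step manipulates only words in $v_i,v_{i+1},\gamma_i,\tau_i$. I verified that your inductive step goes through: the expanded right-hand side does collapse to $v_iv_{i+1}\bigl(\gamma_{i+1}\gamma_i\tau_i\gamma_i\gamma_{i+1}\bigr)v_{i+1}v_i$ using $v_iv_{i+1}v_iv_{i+1}=v_{i+1}v_i$ and the commutation $\gamma_iv_{i+1}=v_{i+1}\gamma_i$, which you correctly reduce to the hypotheses of the lemma. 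The trade-off is that your route avoids the long telescoping words and the identity $(v_1\cdots v_{i-1})v_i(v_{i-1}\cdots v_1)=(v_i\cdots v_2)v_1(v_2\cdots v_i)$, at the cost of a preliminary derivation of $\gamma_iv_{i+1}=v_{i+1}\gamma_i$ and of actually writing out the word collapse, which your proposal only sketches; to be a complete proof that chain of equalities must be displayed. Both arguments bottom out at the same reduced relation, and both silently correct the typo in the statement, where $v_i\sigma_iv_i$ should read $v_i\tau_iv_i$.
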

\begin{proof} For the proof we will use the following relation:
\begin{equation}
    (v_1 \ldots v_{i-1})v_i(v_{i-1}\ldots v_1)=(v_i\ldots v_2)v_1(v_2\ldots v_i) \label{v}
\end{equation}
\begin{align*}
    \gamma_{i+1}\gamma_{i}\tau_i\gamma_{i}\gamma_{i+1} 
 & = (v_i\ldots v_1)\gamma_1(v_1\ldots v_i)(v_{i-1} \ldots v_1)\gamma_1 \underline{(v_1 \ldots v_{i-1})(v_{i-1} \ldots v_1)}(v_{i} \ldots v_2)\\
    & \tau_1(v_2 \ldots v_{i}) \underline{(v_1 \ldots v_{i-1})(v_{i-1} \ldots v_1)} \gamma_1(v_1 \ldots v_{i-1})(v_i \ldots v_1)\gamma_1(v_1 \ldots v_i)~~~~\text{ (by (\ref{relB-inverse-v}))}\\
 & = (v_i\ldots v_1)\gamma_1\underline{(v_1\ldots v_{i-1} v_i v_{i-1} \ldots v_1)} \gamma_1(v_{i} \ldots v_2)\tau_1(v_2 \ldots v_{i})\gamma_1\\
     & \underline{(v_1 \ldots v_{i-1} v_i v_{i-1} \ldots v_1)}\gamma_1(v_1 \ldots v_i)~~~~~~~~~~~~~~~~~~~~~~~~~~~~~~~~~~~~~~~~~~~~~~~~~\text{ (by (\ref{v}))}\\
 & = (v_i\ldots v_1)\underline{\gamma_1}(v_i\ldots v_2 v_1 v_2 \ldots v_i)\underline{\gamma_1}(v_{i} \ldots v_2)\tau_1(v_2 \ldots v_{i}) \underline{\gamma_1}\\
     & (v_i \ldots v_2 v_1 v_2 \ldots v_i) \underline{\gamma_1}(v_1 \ldots v_i)~~~~~~~~~~~~~~~~~~~~~~~~~~~~~~~~~~~~~~~~~~~~~~~~~~~~~~~\text{ (by (\ref{relB-height-bv}))}\\
     & = (v_i\ldots v_1)(v_i\ldots v_{2})\gamma_1 v_1\underline{(v_{2} \ldots v_i)(v_{i} \ldots v_2)}\gamma_1\tau_1\gamma_1\underline{(v_2 \ldots v_{i})(v_i \ldots v_{2})}\\
     & v_1 \gamma_1 (v_{2} \ldots v_i)(v_1 \ldots v_i)~~~~~~~~~~~~~~~~~~~~~~~~~~~~~~~~~~~~~~~~~~~~~~~~~~~~~~~~~~~~~~~~\text{ (by (\ref{relB-inverse-v}))}\\
     & = (v_i\ldots v_1)(v_i\ldots v_{2})\underline{\gamma_1 v_1\gamma_1\tau_1\gamma_1v_1\gamma_1}(v_{2} \ldots v_i)(v_1 \ldots v_i)~~~~~~~~~~~~~~~~~~~~~~\text{ (by (\ref{relB-height-sb2}))}\\
     & = v_i\underline{(v_{i-1}\ldots v_1)(v_i\ldots v_{2})\tau_1(v_{2} \ldots v_i)(v_1 \ldots v_{i-1})}v_i~~~~~~~~~~~~~~~~~~~~~~~~~~~~\text{ (by (\ref{2nd reduction}))}\\
     & = v_i\tau_iv_i.
\end{align*}

\end{proof}

\section{Concluding remarks}

In this paper, we explore singular twisted virtual braids and the associated singular twisted virtual braid monoid. We establish theorems regarding singular twisted links such as the Alexander theorem and the Markov theorem. Additionally, we present a monoid presentation and a reduced monoid presentation for the singular twisted virtual braid monoid. As a potential direction for future research, developing invariants for both singular twisted virtual braids and singular twisted links would be of considerable interest.


\subsection*{Acknowledgements}

The first author would like to thank the University Grants Commission(UGC), India, for Research Fellowship with NTA Ref.No.191620008047. The second author acknowledges the support given by the NBHM, Government of India under grant-in-aid with F.No.02011/2/20223NBHM(R.P.)/R\&D II/970. This work was partially supported by the FIST program of the Department of Science and Technology, Government of India, Reference No. SR/FST/MS-I/2018/22(C).

\noindent Department of Mathematics, Indian Institute\\
 of Technology Ropar, Punjab, India.
\begin{verbatim} Email: komal.20maz0004@iitrpr.ac.in, prabhakar@iitrpr.ac.in  \end{verbatim}

\end{document}